\newcommand{\comment}[1]{}
\newcommand{\Z}{\mathbf{Z}}
\newcommand{\F}{\mathbf{F}}
\newcommand{\res}{\mathrm{res}}
\DeclareMathOperator{\Res}{Res}
\DeclareMathOperator{\Gal}{Gal}
\theoremstyle{plain}
\newtheorem{theorem}{Theorem}[section]
\newtheorem{corollary}[theorem]{Corollary}
\newtheorem{lemma}[theorem]{Lemma}
\newtheorem{proposition}[theorem]{Proposition}
\newtheorem{assumption}[theorem]{Assumption}
\theoremstyle{definition}
\newtheorem{remark}[theorem]{Remark}
\newtheorem{example}[theorem]{Example}
\newtheorem{problem}{Problem}
\begin{document}

\title{Notes on summation polynomials}
\author[]{Michiel Kosters (TL@NTU, mkosters@ntu.edu.sg), Sze Ling Yeo (I2R, slyeo@i2r.a-star.edu.sg)}
\address{}
\email{}
\urladdr{}
\date{\today}
\keywords{summation polynomial, elliptic curve, NP-complete, trace map, discrete logarithm, first-fall degree, degree of regularity, subset sum problem}
\subjclass[2010]{14H52, 13P15}

\maketitle

\begin{abstract}
In these short notes, we will show the following. Let $\F_q$ be a finite field and let $E/\F_q$ be an elliptic curve. Let $S_r$ be the $r$th summation/Semaev polynomial for $E$.

\begin{itemize}
\item Under an assumption, we show that it is NP-complete to check if $S_r$ for some large $r$ evaluates to zero on some input. Unconditionally, we prove a similar result for summation polynomials over singular curves. This suggests limitations in the usage of summation polynomials in for example algorithms to solve the elliptic curve discrete logarithm problem. 
\item Assume that $q$ is a power of $2$. We show that the Weil descent to $\F_2$ of $S_3$ for ordinary curves in general has first fall degree $2$, which is much lower than expected. The reason is the existence of a group morphism to $\F_2$ which gives a linear polynomial after Weil descent. We want to raise awareness of its existence and raise doubt on certain Gr\"obner basis heuristics which claim that the first fall degree is close to the degree of regularity. Furthermore, this morphism can be used to speed up the relation generation to solve the elliptic curve discrete logarithm problem.
\end{itemize}

\end{abstract}

\section{Introduction}

Let $\F_q$ be a finite field of cardinality $q$ and let $E/\F_q$ be an elliptic curve. Let $P \in E(\F_q)$ be a rational point and let $Q \in \langle P \rangle$. The \emph{elliptic curve discrete logarithm problem} (ECDLP for short) is to find an integer $m$ such that $mP=Q$. The apparent hardness of this problem is of great importance in cryptography as it forms the backbone of the security of various elliptic curve-based cryptographic primitives such as in the Diffie-Hellman key exchange protocol.

Various attacks on the ECDLP exist. For certain types of curves, fast algorithms exist (Silverman, \cite[XI.6]{SI}), but for generic elliptic curves, no sub-exponential algorithm is known.

Motivated by the sub-exponential index calculus attack for the discrete logarithm problem in finite fields, attempts were made to mimic such attacks for elliptic curves. In general, such attacks on ECDLP focus on the generation of relations. When enough relations have been obtained, one can solve the discrete logarithm using linear algebra. Let $E/\F_q$ be an elliptic curve given in Weierstrass model and let $x: E(\F_q) \setminus \{0\} \to \F_q$ be the $x$-coordinate map. For every integer $r \in \Z_{\geq 3}$, one can define the $r$th summation/Semaev polynomial $S_r \in \F_q[X_0,\ldots,X_{r-1}]$ for $E$. This polynomial has the following property. Let $x_0,\ldots,x_{r-1} \in \overline{\F_q}$. Then one has $S_r(x_0,\ldots,x_{r-1})=0$ if and only if there are $P_i \in E(\overline{\F_q})$ with $x(P_i)=x_i$ and $P_0+\ldots+P_{r-1}=0$ (Proposition \ref{1}). Such summation polynomials have been used to obtain the required relations between points on an elliptic curve (see for example Diem \cite{DIE}). In articles such as \cite{DIE}, \cite{FAU}, \cite{FAU2} and \cite{GAL} people suggest and try to work with summation polynomials where $r$ is large and they often handle them using symmetric properties of these polynomials. In \cite{FAU2}, the corresponding authors, for example, compute the $8$th summation polynomials.

The goal of this article is twofold. First, we want to show the limitations of the summation polynomial.  Under the assumption that one can construct elliptic curves over finite fields together with a point of large order, we show that it is  NP-complete to check that the $r$-th summation polynomial evaluates to zero on some input for large $r$ (Theorem \ref{102}i). Furthermore, we define summation polynomials for singular curves (see Section \ref{39}).  We unconditionally prove a similar result for summation polynomials coming from singular curves (Theorem \ref{102}ii). We prove these statements by reducing 3-SAT to the subsets sum problem and then to the problem concerning summation polynomials. We remark that these results do not imply that ECDLP is NP-complete (Remark \ref{144}).

Second, let $\F=\F_{2^n}$ and let $E/\F$ be an elliptic curve given by $Y^2+a_1 XY+a_3 Y = X^3+a_2 X^2+a_4 X+a_6$  such that $a_1 \neq 0$.  Petit and Quisquater in \cite{PET} suggest that the \emph{degree of regularity}, an important parameter in the complexity analysis of Gr\"obner basis calculations, of specific Weil descent systems coming from the ECDLP for $E$ is close to the \emph{first fall degree} of such a system. This assumption allows the authors to heuristically obtain sub-exponential algorithms for ECDLP. Their heuristic assumption is largely motivated by a similar and widely-believed conjecture concerning a Weil descent system arising from a univariate polynomial. Besides, they performed some experiments with small parameters ($n \le 17$) for Weil descent systems from the third summation polynomial to show that the degree of regularity in these cases is close to the bound on the first fall degree they give.    In this paper, we explicitly show that the first fall degree in this case in general is $2$ (Corollary \ref{w8} and Remark \ref{w11}).  The reason for this unexpectedly low first fall degree is the existence of a surjective morphism which factors through taking $x$-coordinates:
\begin{align*}
E(\F) \to&  \F_2 \\
P \mapsto& \mathrm{Tr}_{\F/\F_2} \left( \frac{x(P)+a_2}{a_1^2} \right).
\end{align*}

On the other hand, we performed further experiments to investigate the first fall degree assumption for $n$ upto $40$. Our results indicate that contrary to the assumption, the degree of regularity seems to grow as $n$ increases. This raises doubts to the heuristic assumption, and consequently, the heuristic sub-exponential complexity estimate for the ECDLP in \cite{PET}. 

Next, we point out  that even though the trace morphism is known, as far as we are aware, it has not been utilized in ECDLP computations. Indeed, this morphism can be used to speed up Gr\"obner bases calculations to solve ECDLP (Remark \ref{109}). 

Finally, we will comment on the recent preprints by Semaev \cite{SEM} and Karabina \cite{KARA} based on the results of this article. 

The remainder of this paper is organized as follows. In Section 2, we review the definition and properties of summation polynomials. Section 3 is dedicated to our first main result, namely, the NP-completeness of the evaluation of a summation polynomial on a given input. In Section 4, we describe the trace morphism which leads us to determine the first fall degree of the Weil descent system arising from the third summation polynomial for an elliptic curve over a finite field of characteristic $2$. Finally, we wrap up the paper with some experimental results on the degree of regularity of such systems and we discuss the results of Semaev \cite{SEM} and Karabina \cite{KARA}.     

\section{Summation polynomials} \label{39}

In this section, we will define summation polynomials for a general elliptic curve in Weierstrass form.

Let $F$ be a field and let $A=(a_1,a_2,a_3,a_4,a_6) \in F^5$. Set 
\begin{align*}
b_2 =& a_1^2+4 a_2, \\
b_4 =& a_1 a_3+2 a_4, \\
b_6 =&  a_3^2+4a_6, \\
b_8=& a_1^2 a_6-a_1a_3a_4+a_2 a_3^2+4a_2 a_6-a_4^2.
\end{align*}
We define
\begin{align*}
S_{A,2}=X_0-X_1 \in F[X_0,X_1].
\end{align*}
We define the third summation polynomial to be the polynomial $S_{A,3} \in F[X_0,X_1,X_2]$ of degree $4$ by:
\begin{align*}
S_{A,3} =& (X_0^2 X_1^2+X_0^2 X_2^2+X_1^2 X_2^2)-2 \cdot (X_0^2 X_1 X_2 +X_0 X_1^2 X_2 + X_0 X_1 X_2^2)   \\
& - b_2 \cdot (X_0 X_1 X_2) - b_4 \cdot (X_0 X_1+X_0X_2 + X_1X_2) -b_6  \cdot (X_0+X_1+X_2)-b_8.
\end{align*}
We will quite often write  $S_A$ instead of $S_{A,3}$.
For $r \in \Z_{>3},$ we recursively define the $r$th summation polynomial by
\begin{align*}
S_{A,r}=\Res_{X}\left(S_{A,r-1}(X_0,\ldots,X_{r-3},X) , S_{A,3}(X_{r-2},X_{r-1},X) \right) \in F[X_0,\ldots,X_{r-1}],
\end{align*}

where $\res_{X}$ denotes the resultant with respect to $X$.

We have the following proposition.

\begin{proposition} \label{1}
Let $F$ be a field and let $E/F$ be an elliptic curve given by $Y^2+a_1 XY+a_3 Y = X^3+a_2 X^2+a_4 X+a_6$. Let $r \in \Z_{\geq 2}$ and let $x_0,\ldots,x_{r-1} \in \bar{F}$. Then there are $P_0, \ldots, P_{r-1} \in E(\overline{F}) \setminus \{0\}$ with $x(P_i)=x_i$ ($i=0,\ldots,r-1$) such that $P_0+\ldots+P_{r-1}=0$ if and only if $S_{(a_1,a_2,a_3,a_4,a_6),r}(x_0,\ldots,x_{r-1})=0$. 
\end{proposition}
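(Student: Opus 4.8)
The plan is to prove the proposition by induction on $r$, treating $r=2$ and $r=3$ as base cases and using the recursive (resultant) definition for the inductive step $r \geq 4$. Throughout I use that the $x$\dash coordinate map satisfies $x(-P) = x(P)$, that over the algebraically closed field $\overline{F}$ every finite $x$\dash value is attained by a point, and the geometric form of the group law: nonzero points $P_0, P_1, P_2 \in E(\overline{F})$ satisfy $P_0 + P_1 + P_2 = 0$ if and only if they lie on a common line, which is then necessarily non-vertical (a vertical line meets $E$ in two affine points and the point $0$).

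For $r = 2$ the statement is immediate: $S_{A,2}(x_0,x_1) = x_0 - x_1$ vanishes iff $x_0 = x_1$, and since $x(-P) = x(P)$, this is exactly the condition that there is a point $P_0 \ne 0$ over $x_0$ with $P_1 := -P_0 \ne 0$ over $x_1$ and $P_0 + P_1 = 0$. For $r = 3$ I would substitute a line $Y = \lambda X + \mu$ into the Weierstrass equation to get a monic cubic in $X$ whose roots are the $x$\dash coordinates of the three intersection points; comparing with $(X - x_0)(X - x_1)(X - x_2)$ via Vieta expresses the elementary symmetric functions of $x_0, x_1, x_2$ in terms of $\lambda, \mu$ and the $a_i$. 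A direct elimination of $\lambda$ and $\mu$ from these three relations then shows that a pair $(\lambda, \mu) \in \overline{F}^2$ exists precisely when $S_{A,3}(x_0,x_1,x_2) = 0$; this computation is the arithmetic crux of the base case, but it is routine. Given such $(\lambda,\mu)$, the points $P_i = (x_i, \lambda x_i + \mu)$ lie on $E$, are nonzero and collinear, hence sum to $0$; conversely three collinear nonzero points sit on a non-vertical line, yielding the required $(\lambda,\mu)$.

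For the inductive step, write $S_{A,r}(x_0,\dots,x_{r-1}) = \Res_X\bigl(f(X), g(X)\bigr)$ with $f(X) = S_{A,r-1}(x_0,\dots,x_{r-3},X)$ and $g(X) = S_{A,3}(x_{r-2},x_{r-1},X)$. I would invoke the criterion that this resultant vanishes iff $f$ and $g$ share a root $x^* \in \overline{F}$ (or their leading coefficients simultaneously vanish). A common finite root $x^*$ feeds the two halves: the induction hypothesis applied to $f(x^*) = 0$ produces nonzero $P_0,\dots,P_{r-3}$ and a point over $x^*$ summing to $0$, while the $r=3$ case applied to $g(x^*) = 0$ produces nonzero $P_{r-2}, P_{r-1}$ and a point over $x^*$. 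Since both auxiliary points lie over the same $x^*$ they agree up to sign; here I would use that negating an entire zero\dash sum configuration preserves all $x$\dash coordinates, so after possibly replacing one configuration by its negative I may glue the two relations into $P_0 + \dots + P_{r-1} = 0$. The converse direction runs the same bookkeeping backwards, setting $x^* = x(P_0 + \dots + P_{r-3})$.

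The main obstacle is the degenerate case in which the natural intermediate point $R = P_0 + \dots + P_{r-3} = -(P_{r-2} + P_{r-1})$ equals $0$, so that $x^*$ would have to be ``$\infty$'' and no finite common root is available. This forces me to track leading coefficients: as a polynomial in $X$, $S_{A,3}(x_{r-2},x_{r-1},X)$ has degree $2$ with leading coefficient $(x_{r-2}-x_{r-1})^2$, which vanishes exactly when $x_{r-2} = x_{r-1}$, that is, precisely when $P_{r-2} + P_{r-1}$ can be $0$; the ``root at infinity'' branch of the resultant criterion must then be matched with this degenerate geometry. I would also need the standard but nontrivial technical point that specializing the variables commutes with forming the resultant in $X$ (equivalently, using the product\dash over\dash roots formula $\Res_X(f,g) = \mathrm{lc}(f)^{\deg g}\prod_{f(\alpha)=0} g(\alpha)$), so that the vanishing of the specialized resultant really is governed by common roots and degree drops. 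Handling these boundary cases cleanly---rather than the symmetric\dash function computation---is where the real care is required.
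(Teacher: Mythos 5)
Your proof is correct and follows essentially the same route as the paper, which simply observes that the resultant definition reduces everything to the cases $r=2,3$ and cites Diem (Lemma 3.4) for those base cases. You actually supply more detail than the paper does --- notably the Vieta derivation for $r=3$ and the leading-coefficient / ``root at infinity'' analysis of the degenerate case $P_0+\cdots+P_{r-3}=0$, which is precisely the point the paper's one-line argument leaves implicit.
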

\begin{proof}
From the definition of the resultant, one directly sees that it is enough to prove the case $r=2, 3$. See \cite{DIE}, especially Lemma 3.4.
\end{proof}

The next proposition describes two degenerate cases of the summation polynomial.

\begin{proposition} \label{2}
Let $F$ be a field. Let $r \in \Z_{\geq 2}$. One has the following.

\begin{enumerate}
\item Let $x_0,\ldots,x_{r-1} \in \overline{F}^*\setminus \{1\}$. Then there are $n_i \in \{-1,1\}$ ($i=0,\ldots,r-1$) such that $x_0^{n_0} \cdots x_{r-1}^{n_{r-1}}=1$ if and only if 
\begin{align*}
S_{(1,0,0,0,0),r} ( x_0/(x_0-1)^2, \ldots, x_{r-1}/(x_{r-1}-1)^2 ) =0.
\end{align*}
\item Let $x_0,\ldots,x_{r-1} \in \overline{F} \setminus \{0\}$. Then there are $n_i \in \{-1,1\}$ ($i=0,\ldots,r-1$) such that $n_0x_0+\ldots+n_{r-1}x_{r-1}=0$ if and only if 
\begin{align*}
S_{(0,0,0,0,0),r} (1/x_0^2, \ldots, 1/x_{r-1}^2 ) =0.
\end{align*}
\end{enumerate}
\end{proposition}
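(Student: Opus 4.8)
The plan is to recognize both $S_{(1,0,0,0,0),r}$ and $S_{(0,0,0,0,0),r}$ as summation polynomials of singular Weierstrass cubics and to transport the collinearity characterization behind Proposition \ref{1} onto the smooth locus, where the group law becomes multiplication, respectively addition, in the ground field. For $A=(1,0,0,0,0)$ the Weierstrass equation is $Y^2+XY=X^3$, a nodal cubic with node at $(0,0)$; for $A=(0,0,0,0,0)$ it is $Y^2=X^3$, a cuspidal cubic with cusp at $(0,0)$. In each case the smooth locus $E_{ns}(\overline{F})$ is an abelian group under the usual chord-and-tangent law, and the construction of $S_{A,3}$ as the condition that three points with prescribed $x$-coordinates be collinear, together with the resultant recursion defining $S_{A,r}$, is purely algebraic in $A$ and never uses the discriminant. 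I would therefore first record the analogue of Proposition \ref{1} for smooth points of these singular curves: $S_{A,r}(w_0,\ldots,w_{r-1})=0$ if and only if there are $P_0,\ldots,P_{r-1}\in E_{ns}(\overline{F})\setminus\{O\}$ with $x(P_i)=w_i$ and $P_0+\cdots+P_{r-1}=O$.

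Next I would write down explicit isomorphisms. Parametrizing lines $Y=mX$ through the singular point and intersecting with the cubic gives, in the nodal case, $(X,Y)=(m(m+1),m^2(m+1))$; setting $u=m/(m+1)$ realizes an isomorphism $E_{ns}(\overline{F})\cong\overline{F}^*$ sending $O$ to $1$, under which $x=u/(u-1)^2$. In the cuspidal case one gets $(X,Y)=(m^2,m^3)$, and $u=1/m$ realizes an isomorphism $E_{ns}(\overline{F})\cong(\overline{F},+)$ sending $O$ to $0$, under which $x=1/u^2$; both isomorphisms are classical (see \cite{SI}). In both cases negation $P\mapsto-P$ corresponds to $u\mapsto u^{-1}$, respectively $u\mapsto-u$, and one checks directly that this fixes the displayed $x$-coordinate. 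Consequently the fibre of $x$ over a nonzero value consists of exactly the two parameters $\{u,u^{-1}\}$, respectively $\{u,-u\}$: indeed $x\,(u-1)^2=u$ has roots with product $1$, and $u^2=1/x$ has roots $\pm u$. These formulas match the arguments $x_i/(x_i-1)^2$ and $1/x_i^2$ in the statement, with $x_i$ playing the role of the parameter $u$.

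Finally I would assemble the equivalences. For the first part, set $w_i=x_i/(x_i-1)^2$; choosing a point $P_i$ over $w_i$ is exactly choosing a sign $n_i\in\{-1,1\}$ with parameter $x_i^{n_i}$, and $P_0+\cdots+P_{r-1}=O$ translates under the isomorphism to $\overline{F}^*$ into $\prod_i x_i^{n_i}=1$. The hypotheses $x_i\in\overline{F}^*\setminus\{1\}$ exactly exclude the parameter $1$ (the identity $O$, which has no $x$-coordinate) and the value $0$ (which would force the singular point), so every relevant parameter is a genuine smooth point; applying the singular analogue of Proposition \ref{1} then gives the stated equivalence. The second part is identical with $w_i=1/x_i^2$, the multiplicative group replaced by $(\overline{F},+)$ and inverses by negatives, so that $P_0+\cdots+P_{r-1}=O$ becomes $n_0x_0+\cdots+n_{r-1}x_{r-1}=0$, with $x_i\neq 0$ excluding the identity.

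The main obstacle is the first step: justifying that Proposition \ref{1} remains valid for the smooth locus of a singular Weierstrass cubic. I expect this to be routine, since the derivation of $S_{A,3}$ and the resultant recursion are insensitive to singularities away from the chosen points, but it must be verified that the chord-and-tangent law and its collinearity description behave exactly as in the nonsingular case when all $P_i$ lie on $E_{ns}$, and that the domain restrictions on the $x_i$ genuinely prevent solutions from specializing to the singular point. A secondary point requiring a remark is the ramification of $x$: at parameters with $u=u^{-1}$ (that is, $u=\pm 1$ in the first part) or $u=-u$ (that is, $u=0$ in the second), the two sign choices coincide, but since the statement only asserts existence of some $n_i$ this causes no difficulty.
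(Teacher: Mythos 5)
Your proposal is correct in substance but takes a genuinely different route from the paper. The paper's proof is computational: it reduces to $r=2,3$ by properties of the resultant (as you do), and then for $r=3$ verifies two explicit polynomial identities, namely that $S_{(1,0,0,0,0)}\bigl(x_0/(x_0-1)^2,\ldots\bigr)$ equals $\bigl((x_0-1)(x_1-1)(x_2-1)\bigr)^{-4}(x_1x_2-x_0)(x_0x_2-x_1)(x_2-x_0x_1)(x_0x_1x_2-1)$ and the analogous factorization of $S_{(0,0,0,0,0)}(1/x_0^2,\ldots)$ into the four linear forms $\pm x_0\pm x_1+x_2$; the equivalence with the existence of signs $n_i$ is then immediate. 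You instead interpret both polynomials as summation polynomials of the nodal curve $y^2+xy=x^3$ and the cuspidal curve $y^2=x^3$, and transport Proposition \ref{1} to the smooth locus via the classical isomorphisms $E_{ns}(\overline{F})\cong\overline{F}^*$ and $E_{ns}(\overline{F})\cong\overline{F}$. This is precisely the conceptual explanation the authors give in the remark \emph{following} Proposition \ref{2} (they record the same parametrizations and cite \cite[III.2.5]{SI}), but it is not their proof, and for good reason: the step you defer --- that the explicit formula for $S_{A,3}$ still characterizes collinearity of smooth points when $A$ is a singular Weierstrass datum --- is exactly the content that their two factorization identities verify for these two curves. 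Proposition \ref{1} as stated (and Diem's Lemma 3.4, which it cites) is for elliptic curves, so you cannot simply invoke it; you must either rerun the derivation of $S_{A,3}$ from the addition formulas on $E_{ns}$ (checking the degenerate configurations $x_0=x_1$, $P_0\pm P_1=O$, and that the excluded values $x_i=0,1$ resp.\ $x_i=0$ really keep all points away from the singularity and from $O$ --- your domain analysis here is correct) or fall back on the explicit computation, which is what the paper does. Your remaining details (the fibre of $x$ being $\{u,u^{-1}\}$ resp.\ $\{u,-u\}$, negation fixing the $x$-coordinate, the identity corresponding to $u=1$ resp.\ $u=0$) all check out. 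So: your approach buys a uniform, conceptual argument that would extend to other singular Weierstrass models, at the cost of one nontrivial lemma left as an expectation; the paper's approach is two brute-force identities that settle exactly these two cases with no appeal to the group law. To make your proof complete you must actually prove the singular analogue of Proposition \ref{1} for $r=3$ rather than assert that it is routine.
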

\begin{proof}
By properties of the resultant, it is enough to prove the proposition for $r=2, 3$. The case $r=2$ is an easy calculation. 

i. Assume $r=3$.
There are $n_i$ as above if and only if 
\begin{align*}
0=& (x_0 x_1 x_2-1) \cdot (x_0^{-1} x_1 x_2-1) \cdot (x_0 x_1^{-1} x_2-1) \cdot (x_0 x_1 x_2^{-1} -1) \\
=& (-x_0 x_1 x_2)^{-1} \cdot (x_1 x_2-x_0) (x_0x_2 -x_1) (-x_0x_1 + x_2) (x_0x_1x_2 - 1).
\end{align*}
A calculation shows that
\begin{align*}
& S_{(1,0,0,0,0)} \left( \frac{x_0}{(x_0-1)^2}, \frac{x_1}{(x_1-1)^2}, \frac{x_2}{(x_2-1)^2} \right)  = \\
& \left( (x_0-1)(x_1-1)(x_2 - 1) \right)^{-4} \cdot (x_1x_2 -x_0) (x_0x_2 -x_1) (-x_0x_1 + x_2) (x_0 x_1 x_2 - 1).
\end{align*}
Hence the result follows.

ii. The proof is similar to the proof of i, because one has
\begin{align*}
 & S_{(0,0,0,0,0)} \left( \frac{1}{x_0^2}, \frac{1}{x_1^2}, \frac{1}{x_2^2} \right) = \\
& \left( x_0 x_1 x_2 \right)^{-4} \cdot (-x_0 + x_1 - x_2 ) (-x_0 + x_1 + x_2) (x_0 + x_1 - x_2) (x_0 + x_1 + x_2).
\end{align*}
\end{proof}

\begin{remark}
The resemblance between Proposition \ref{1} and Proposition \ref{2} is no coincidence.  

Let $F$ be a field. Consider the nodal curve $E$ given by $y^2+xy-x^3=0$ (Weierstrass model $(1,0,0,0,0)$). Let $E_{ns}(F)$ be the non-singular locus of $E$ over $F$. We have an isomorphism:
\begin{align*}
F^* \to& E_{ns}(F) \\
1 \mapsto& 0 \\
t \mapsto& (t/(t-1)^2,t/(t-1)^3).
\end{align*}
The inverse is given by $0 \mapsto 1$ and $(x,y) \mapsto 1+x/y$.
See \cite[Chapter III, Proposition 2.5]{SI}. 

For ii consider the cuspidal curve $E$ given by $y^2=x^3$. One has $E(F) \cong F$ in this case.

Finally, there is also the case of a nodal elliptic curve where the tangent line at the node is not rational. In this case, one has $E(F) \cong \ker(\mathrm{Norm}_{F'/F})$ where $F'/F$ is a quadratic extension of $F$ (\cite[Theorem 2.31]{WAS}). One should be able to use similar summation polynomials in this case.
\end{remark}

\section{NP-completeness of summation polynomials} \label{580}

We will now study NP-completeness properties of summation polynomials. Most results in this section were already known. See for example \begin{verbatim}
https://ellipticnews.wordpress.com/2011/08/04/hard-problems-of-
algebraic-geometry-codes-by-qi-cheng/
\end{verbatim} 

We would like to warn the reader that the above result does not imply that summation polynomials are not helpful for solvingt the elliptic curve discrete logarithm problem (Remark \ref{144}).

We begin with the following problem.

\begin{problem}[Subset sum problem]
Given a finite abelian group $G$, a subset $S \subseteq G$ and $g \in G$, determine if there is a subset $T \subseteq S$ such that $\sum_{t \in T} t=g$.
\end{problem}

We start with a known result, although the proof for $m=3$ below might be new.

\begin{proposition} \label{104}
The subset sum problem for the following sets of groups is NP-complete:
\begin{enumerate}
\item $\{\Z\}$, $\{\Z/n\Z,\ n \in \Z_{\geq 1}\}$;
\item $\{(\Z/m\Z)^n,\ n \in \Z_{\geq 1}\}$ for $m \geq 3$.
\end{enumerate}
\end{proposition}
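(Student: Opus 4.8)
The plan is to dispatch NP-membership uniformly and then argue NP-hardness family by family. For membership, a subset $T\subseteq S$ is a polynomial-size certificate, and computing $\sum_{t\in T}t$ and comparing it with $g$ is polynomial in each of the three families (integer, or modular, addition of polynomially many elements of polynomially bounded size). So the real content is NP-hardness.

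For part (i) I would invoke the classical NP-hardness of subset sum over $\Z$ via the textbook reduction from $3$-SAT: for each variable one builds two base-$10$ integers (one per truth value) carrying a digit $1$ in the variable's own position and in the position of every clause that literal satisfies, together with two slack integers per clause; choosing the base large enough that no carries occur, a subset meets the target (digit $1$ in each variable position, digit $3$ in each clause position) iff the formula is satisfiable. To pass to $\{\Z/n\Z\}$ I would take any instance $a_1,\dots,a_k,t$ over $\Z$ with non-negative entries and set the modulus $n>\sum_i a_i$. Every subset sum then lies in $\{0,\dots,n-1\}$, so there is no wrap-around and $\sum_{i\in T}a_i\equiv t\pmod n$ holds iff $\sum_{i\in T}a_i=t$; since $n$ has $O(\log\sum_i a_i)$ bits, this is polynomial.

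The heart of the proposition is part (ii), and here I would reduce not from plain $3$-SAT but from \textsc{one-in-three $3$SAT} (decide whether some assignment makes exactly one literal true in each clause), which is NP-complete. Put $n=(\#\text{variables})+(\#\text{clauses})$ and reserve one coordinate of $(\Z/m\Z)^n$ for each variable and each clause. For variable $x_i$ create two vectors $T_i,F_i$ (``$x_i$ true''/``$x_i$ false''), each with a $1$ in the $x_i$-coordinate; for every literal of every clause $C_j$ place a $1$ in the $C_j$-coordinate of whichever of $T_i,F_i$ makes that literal true. Take the target $g$ to be the all-ones vector. The variable-coordinate equations force exactly one of $T_i,F_i$ to be selected (the attainable values $0,1,2$ are distinct mod $m$ for $m\ge 3$, and only $1$ matches the target), so a feasible selection is precisely a truth assignment; the $C_j$-coordinate then records the number $c\in\{0,1,2,3\}$ of true literals in $C_j$, and the equation $c\equiv 1\pmod m$ holds iff $c=1$, i.e. iff the one-in-three condition is met.

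The crux—and the reason the $m=3$ case needs this particular argument—is exactly this last point: for every $m\ge 3$ the residues of $0,1,2,3$ single out $c=1$ (for $m=3$ because $3\equiv 0\not\equiv 1$, for $m\ge 4$ because $0,1,2,3$ are already distinct). The naive alternative, encoding an OR-clause with two slack vectors and target $3$—which works over $\Z$ and over $(\Z/m\Z)^n$ for $m\ge 4$—breaks down precisely at $m=3$, since there $3\equiv 0$ and the empty selection would spuriously satisfy the clause; routing through \textsc{one-in-three $3$SAT} removes all slack and sidesteps this collision. I therefore expect the main (indeed the only genuine) obstacle to be verifying the clause gadget modulo $3$, namely that among literal-counts $0,1,2,3$ the target residue $1$ is attained only by $c=1$; the remainder is bookkeeping, together with the routine checks that all three reductions run in polynomial time and preserve yes-instances in both directions.
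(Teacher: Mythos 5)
Your proposal is correct, and for part (ii) it takes a genuinely different route from the paper. The paper reduces from plain $3$\dash SAT and concentrates the difficulty in a clause gadget: one seeks $c_1,c_2,c_3,d_1,\ldots,d_k,t\in(\Z/m\Z)^r$ such that every nonempty subset of $\{c_1,c_2,c_3\}$ can be completed to $t$ by some subset of the $d_i$, while no subset of the $d_i$ alone sums to $t$. For $m>3$ this is the usual target\dash $3$ construction with two slack vectors; for $m=3$ --- exactly the collision $3\equiv 0$ you identify --- the paper exhibits an explicit gadget with $r=3$, $k=5$ (namely $c_1=(2,1,2)$, $c_2=(2,2,2)$, $c_3=t=(2,0,1)$ and $d_i$ built as differences of $t$ with sums of the $c_j$), whose verification is a finite check left to the reader. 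You instead change the source problem to \textsc{one-in-three $3$SAT}, which removes the slack vectors altogether and reduces the clause analysis to the one-line observation that among the counts $0,1,2,3$ only $c=1$ is congruent to $1$ modulo any $m\ge 3$. What each approach buys: yours is cleaner and its correctness is immediate, at the price of invoking Schaefer's NP-completeness of \textsc{one-in-three $3$SAT} as an extra black box; the paper stays with ordinary $3$\dash SAT (only Karp is cited) but pays with a gadget whose verification is nontrivial bookkeeping. Your part (i) and the NP-membership argument coincide with the paper's, which simply cites Karp for $\Z$ and notes that the $\Z/n\Z$ case follows by taking the modulus large enough to prevent wrap-around, as you do. The only loose end common to both treatments is the degenerate case of clauses with repeated literals, which can be handled by preprocessing and does not constitute a gap.
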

\begin{proof}
All problems are obviously in NP.

i. This result for $\{\Z\}$ was shown in \cite{KAR}. A proof can also be given as in ii, using some $m$-adic representation of integers. The result for $\{\Z/n\Z,\ n \in \Z_{\geq 1}\}$ follows directly from the result for $\{\Z\}$.

ii. Fix $m$. We first look at another problem. We look for an $r \in \Z_{\geq 1}$, $k \in \Z_{\geq 0}$ and vectors $c_1, c_2, c_3 \in (\Z/m\Z)^r$, $d_1,\ldots,d_k \in (\Z/m\Z)^r$, $t \in (\Z/m\Z)^r$ with the following properties:
\begin{enumerate}
\item given a non-empty subset of $C \subseteq \{c_1,c_2,c_3\}$, there is a subset of $D \subseteq \{d_1,\ldots,d_k\}$ such that $\sum_{c \in C} c + \sum_{d \in D} d=t$;
\item no subset of $\{d_1,\ldots,d_k\}$ sums to $t$.
\end{enumerate}

Suppose we have found a solution to the above problem. We will show how to reduce an instance of $3$-SAT to the subset sum problem in $(\Z/m\Z)^n$ for some small $n$. Assume that the $3$-SAT instance has variables $x_1,\ldots,x_s$, with negations $\overline{x_1},\ldots,\overline{x_s}$ and that there are $w$ clauses. An example of such a clause would be $x_1 \vee \overline{x_2} \vee x_4$. We will now translate this to a subset sum problem in $R=(\Z/m\Z)^{s} \times \left( (\Z/m\Z)^r \right)^w$. We represent an element $R$ as $(a_1,\ldots,a_s, b_1, \ldots,b_t)= \sum_{i=1}^s a_i e_i + \sum_{j=1}^w b_j e_j'$ where $a_i \in \Z/m\Z$ and $b_j \in (\Z/m\Z)^r$ and the $e_i$ and $e_j'$ are the standard basis vectors. Set $c_0=0 \in (\Z/m\Z)^r$ and set $\pi(x_i)=\pi(\overline{x_i})=i$. 

Let $x$ be a variable or its negation. We define $v_x$ as follows. For $j=1,\ldots,t$ define a function $r_x(j) \in \{0,1,2,3\}$ as follows. If $x$ appears in clause $j$ for the first time at position $r \in \{1,2,3\}$ set $r_x(j)=r$. If $x$ does not appear, set $r_x(j)=0$. We set
\begin{align*}
v_x = e_{\pi(x)} + \sum_{j=1}^w c_{r_x(j)} e_j' \in R.
\end{align*}
Furthermore, for $j=1,\ldots,w$ and $i=1,\ldots,k$ set
\begin{align*}
h_{j,i} = d_i e_j' \in R.
\end{align*}
Finally, set
\begin{align*}
w = \sum_{i=1}^s e_i + \sum_{j=1}^w  t e_j'.
\end{align*}
One easily obtains: the $3$-SAT instance has a solution if and only if there is a subset of $\{v_{x_1},\ldots,v_{x_s}, v_{\overline{x_1}},\ldots,v_{\overline{x_m}}\} \cup \{ h_{j,i}: j=1,\ldots,t,\ i=1,\ldots,k\}$ summing to $w$.

It remains to show that we can find the required $r$, $k$, $c_i$, $d_i$ and $t$. Assume first that $m >3$. One can take $r=1$, $k=2$, $c_1=c_2=c_3=1$, $d_1=d_2=1$ and $t=3$.
For $m=3$, it is harder to solve the problem. Set $r=3$, $k=5$ and set
\begin{align*}
c_1=(2,1,2),\ c_2=(2,2,2),\ c_3=(2,0,1),
\end{align*}
\begin{align*}
t=(2,0,1)
\end{align*}
and 
\begin{align*}
d_1=t-c_1,\ d_2=t-c_2,\ d_3=t-c_1-c_2,\ d_4=t-c_1-c_3,\ d_5=t-c_2-c_3.
\end{align*}
One easily verifies that this works.

\end{proof}

\begin{remark}
The proof of Proposition \ref{104}ii fails for $m=2$. The subset sum problem over $(\F_2)^n$ is easy: it is just linear algebra.
\end{remark}

Consider the following assumption.

\begin{assumption} \label{100}
Given a positive integer $n$, one can construct a finite field $\F_q$ of cardinality $q$ and an elliptic curve $E/\F_q$ together with a point $P \in E(\F_q)$ with $\mathrm{ord}(P) \geq n$ in polynomial time in $\log(n)$.
\end{assumption}

\begin{remark}
In a non-deterministic way, one can randomly find a curve $E/\F_q$ with $\# E(\F_q)$ prime and a non trivial point on this curve. One can do this since there are a lot of primes by the prime number theorem and one can count points on curves efficiently by Schoof's algorithm. See \cite{BROK} for more advanced methods on constructing elliptic curves with a prescribed number of points.
\end{remark}

We will now prove NP-completeness of summation polynomials.

\begin{theorem} \label{102}
The following hold.
\begin{enumerate}
\item Assume that Assumption \ref{100} holds. The following problem is NP-complete: given $\F_q$ be a finite field of cardinality $q$, $E/\F_q$ an elliptic curve in Weierstrass form with coefficients $A$, $r \in \Z_{\geq 3}$ an integer and $x_i \in \F_q$ ($i=0,\ldots,r-1$), determine if $S_{A,r}(x_0,\ldots,x_{r-1})$ is zero. 
\item 
Let $p \geq 3$ be a fixed prime. The following problem is NP-complete: given positive integers $n, r$, a finite field $\F_{p^n}$ of cardinality $p^n$, and $a_0,\ldots,a_{r-1} \in \F_{p^n} \setminus \{0\}$, determine if $S_{(0,0,0,0,0),r}(a_0,\ldots,a_{r-1})$ is zero.
\end{enumerate}
\end{theorem}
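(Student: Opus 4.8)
Both problems lie in NP. By Proposition \ref{1} (for an elliptic curve) and the cuspidal analogue recorded in the remark after Proposition \ref{2} (for $S_{(0,0,0,0,0),r}$), a yes-instance is certified by points $P_i$ on the relevant curve with the prescribed $x$-coordinates summing to $0$. Since each prescribed coordinate lies in the base field, the certifying points have coordinates in a quadratic extension, and because the list $x_0,\ldots,x_{r-1}$ (resp. $a_0,\ldots,a_{r-1}$) is part of the input, $r$ is bounded by the input length; thus the certificate is of polynomial size and the relation $\sum_i P_i=0$ is checkable in polynomial time. For NP-hardness we reduce from the subset sum problem, which is NP-complete by Proposition \ref{104}. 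The mechanism common to both parts is that these summation polynomials detect a \emph{signed} sum, not a subset sum, so the crux is a conversion between the two. Given elements $a_1,\ldots,a_k$ of an abelian group and a target $g$, set $\delta_i=2\epsilon_i-1$, giving a bijection $\{0,1\}\ni\epsilon_i\leftrightarrow\delta_i\in\{-1,1\}$ with $\sum_i\delta_i a_i=2\sum_i\epsilon_i a_i-\sum_i a_i$. Adjoining one extra element equal to $s=2g-\sum_i a_i$ (and omitting it when $s=0$) converts ``$\exists$ subset summing to $g$'' into ``$\exists$ signs $\delta_j\in\{-1,1\}$ with $\sum_j\delta_j x_j=0$'': the two global choices $\delta_0=\mp 1$ correspond to a subset summing to $g$ and to a subset summing to $\sum_i a_i-g$, which are equivalent by complementation. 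After discarding zero elements and padding with cancelling pairs of equal nonzero elements, we may assume all elements are nonzero and that their number is the prescribed $r$.

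For (ii), realize $(\Z/p\Z)^M\cong(\F_{p^M},+)$ by computing an irreducible polynomial of degree $M$ over $\F_p$ in time polynomial in $M$, and apply the sign conversion inside this additive group, producing nonzero $x_0,\ldots,x_{r-1}\in\F_{p^M}$ for which a signed sum to zero exists iff the subset sum instance is solvable. By Proposition \ref{2}ii this is equivalent to $S_{(0,0,0,0,0),r}(1/x_0^2,\ldots,1/x_{r-1}^2)=0$, so setting $a_i=1/x_i^2\in\F_{p^M}\setminus\{0\}$ yields the desired instance. As $p$ is fixed every step is polynomial, so the reduction is unconditional.

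For (i) we reduce from subset sum over $\{\Z\}$. Given nonnegative integers $a_1,\ldots,a_k$ and target $g$, put $n=2\sum_i a_i+1$ and invoke Assumption \ref{100} to build $\F_q$, $E/\F_q$ and $P\in E(\F_q)$ with $\ord(P)\geq n$ in time polynomial in $\log n$, hence polynomial in the input. Because every integer $c$ arising (the $a_i$, the target $g$, and $s=2g-\sum_i a_i$) satisfies $|c|<\ord(P)/2$, the map $c\mapsto cP$ sends distinct integer sums to distinct elements of $\langle P\rangle$ and never hits $0$ or the unique $2$-torsion point $(\ord(P)/2)P$. Thus integer subset sum is identified with finding $\epsilon_i\in\{0,1\}$ with $\sum_i\epsilon_i(a_iP)=gP$, and applying the sign conversion inside $\langle P\rangle$ gives points $Q_0,\ldots,Q_{r-1}\in\langle P\rangle\setminus\{0\}$, none $2$-torsion, whose $x$-coordinates $x_i=x(Q_i)\in\F_q$ have exactly the two preimages $\pm Q_i$. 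By Proposition \ref{1}, $S_{A,r}(x_0,\ldots,x_{r-1})=0$ iff one can pick $P_i\in\{Q_i,-Q_i\}$ with $\sum_i P_i=0$, i.e. iff a signed sum to zero exists, i.e. iff the subset sum instance is solvable. The points $Q_i=a_iP$ are computed by fast exponentiation in $E(\F_q)$ in polynomial time, completing the reduction.

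The main obstacle is the control of the group order and of the fibres of the $x$-map in part (i): one must choose $n$ so that $\ord(P)$ exceeds twice $\sum_i a_i$, ensuring both that reduction modulo $\ord(P)$ never conflates distinct integer sums and that every constructed point avoids $0$ and the single $2$-torsion point, so that each prescribed $x$-coordinate has precisely the two preimages $\pm Q_i$ realizing the two signs. One must also verify throughout that, although $\sum_i a_i$ can be exponentially large, its logarithm is polynomial in the input, so that the application of Assumption \ref{100} and the scalar multiplications $a_iP$ remain polynomial-time.
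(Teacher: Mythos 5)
Your proof is correct and follows essentially the same route as the paper: NP membership via the witness of signed points from Propositions \ref{1} and \ref{2}, and NP-hardness by reducing subset sum (Proposition \ref{104}) to a signed-sum instance via the $2\epsilon-1$ conversion, using Assumption \ref{100} for part (i) and an explicit irreducible polynomial to realize $(\F_p)^M$ as $\F_{p^M}$ for part (ii). Your additional care about zero elements, the order bound, and $2$-torsion only makes explicit details the paper leaves implicit.
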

\begin{proof}
i. First of all, notice that this problem is in NP: a witness consists of $n_i \in \{\pm 1\}$ and $P_i \in E(\overline{\F_q})$ with $x(P_i)=x_i$ such that $\sum_i n_i P_i=0$ (Proposition \ref{1}). 

Suppose we are given a subset sum problem for the group $\Z$. Say we need to find $\epsilon_i \in \{0,1\}$ such that $\sum_{i=1}^{m} \epsilon_i v_i = w$. Note that $\sum_{i=1}^{m} \epsilon_i v_i = w$ if and only if $\sum_{i=1}^{m} 2\epsilon_i v_i = 2w$.
Hence the system is equivalent to solving for $n_i \in \{\pm 1\}$ the equation
\begin{align*}
 \sum_{i=1}^{m} n_i v_i = 2w- \sum_{i=1}^{m} v_i=w'.
\end{align*}
Use Assumption \ref{100} to construct a finite field $\F_q$ and a curve $E/\F_q$ with a point $P$ of order at least $1+\sum_i 2v_i$ with Weierstrass coefficients $A$. Then the above holds if and only if 
\begin{align*}
 \sum_{i=1}^{m} n_i v_iP=w'P.
\end{align*}
Assume that $w' \neq 0$. 
The later by Proposition \ref{1} is equivalent to 
\begin{align*}
S_{A,m+1}(x(v_1P),\ldots,x(v_{m}P),x(w'P))=0. 
\end{align*}
If $w'=0$, one can use $S_{A,m}$. Hence the result follows from Proposition \ref{104}i.

ii. The proof is very similar to the proof of i. The problem is in NP by Proposition \ref{2}. Suppose we are given an instance of a subset sum problem over $(\F_p)^m$. After multiplying by $2$ we reduce to the problem of checking if there are $n_i \in \{\pm 1\}$ with say 
\begin{align*}
 \sum_{i=1}^{m} n_i v_i =w'
\end{align*}
with $v_i \neq 0$.

Find an irreducible polynomial $f$ over $\F_p$ of degree $m$ and construct a field $\F_{p^{m}}=\F_p[X]/(f)$ (one can do this since $p$ is fixed, see \cite{SHO2}). Identify $\F_{p}^{m}$ with $\F_{p^{m}}$ using a linear isomorphism. Assume that $w' \neq 0$. Then the last problem is equivalent to checking if $S_{(0,0,0,0,0),m+1}(1/v_1^2,\ldots,1/v_{m}^2,1/w'^2)$ evaluates to zero by Proposition \ref{2}. If $w'=0$, then one can use a lower summation polynomial. Use Proposition \ref{104}ii to finish the proof.
\end{proof}

\begin{remark}
The proof of Theorem \ref{102}ii fails for $p=2$. For $p=2$, one has $S_{(0,0,0,0,0),r}(a_0,\ldots,a_{r-1})$ is $0$ if and only if $a_0+\ldots+a_{r-1}=0$ (Proposition \ref{2}). Hence the problem is very easy.
\end{remark}

\begin{remark} \label{144}
Theorem \ref{102} shows that it is NP-complete to check if summation polynomials evaluate to zero. However, it does not suggest that ECDLP itself is a hard problem. In fact, ECDLP for curves with for example $p$ points can be solved quickly (\cite[Chapter XI, Proposition 6.5]{SI}), but the above proof shows that it is still NP-complete to evaluate the corresponding summation polynomials.
\end{remark}

\section{Weil descent and first fall degrees}

In this section, we will study Weil descent systems coming from summation polynomials over a finite field of characteristic $2$. In particular, we study the system coming from the third summation polynomial from an ordinary elliptic curve. Let us first define the procedure of Weil descent.

\subsection{Weil descent}
Let $p$ be a prime and $n, r \in \Z_{\geq 1}$. Let $\F_{p^n}$ be a field of cardinality $p^n$. Consider
\begin{align*}
R_1=\F_{p^n}[X_1,\ldots,X_r]/(X_i^{p^n}-X_i: i=1,\ldots,r)
\end{align*}
and
\begin{align*}
R_2= \F_p[X_{ij}, i=1,\ldots r,\ j=1,\ldots,n]/(X_{ij}^p-X_{ij},\ i=1,\ldots,r,\ j=1,\ldots,n).
\end{align*}
Finally, set 
\begin{align*}
R_3= \F_{p^n}[X_{ij}, i=1,\ldots r,\ j=1,\ldots,n]/(X_{ij}^p-X_{ij},\ i=1,\ldots,r,\ j=1,\ldots,n).
\end{align*}

One has $R_1 \cong \left(\F_{p^n} \right)^{\F_{p^n}^r}$ and $R_2 \cong \left(\F_p \right)^{\F_p^{nr}}$ as rings, by evaluating the $X_i$ and $X_{ij}$ at points of $\F_{p^n}$ respectively $\F_p$. There is a bijection between the ideals of $R_1$ and the powerset of $\F_{p^n}^r$, and similarly, a bijection between the ideals of $R_2$ and the powerset of $\F_p^{nr}$.

Let $\alpha_1,\ldots,\alpha_n$ be a basis of $\F_{p^n}$ over $\F_p$. This gives us an isomorphism over $\F_p$ between $\F_{p^n}$ and $(\F_p)^n$, and hence one between $\F_{p^n}^r$ and $\left(\F_p \right)^{nr}$. This gives a bijection between the set of ideals of $R_1$ and $R_2$. We call this Weil descent.

The correspondence in practice is given as follows.
Let $f \in R_1$. Set $X_i= \sum_{j=1}^n X_{ij} \alpha_j$. Write 
\begin{align*}
f(\sum_{j=1}^n X_{1j} \alpha_j, \ldots,\sum_{j=1}^n X_{rj} \alpha_j )=\sum_{i=1}^n [f]_i \alpha_i \in R_3
\end{align*}
with $[f]_i \in R_2$. An ideal $I \subseteq R_1$ is mapped to $([f]_i: f \in I, i=1,\ldots,n) \subseteq R_2$. 

With Weil descent one can solve systems over $\F_{p^n}$, by solving systems over $\F_p$.

We make $R_1$ into a $\Z[G]$ module, where $G=\Gal(\F_{p^n}/\F_p)=\langle \mathrm{Frob} \rangle$ by setting
\begin{align*}
\mathrm{Frob}(f)=f^p.
\end{align*} 
For $f \in R_1$ we set
\begin{align*}
\mathrm{Tr}_{\F_{p^n}/\F_p}(f)  = \sum_{g \in G} g(f) = \sum_{i=0}^{n-1} f^{p^i} \in R_1.
\end{align*}
This defines a group morphism which extends the trace map $\mathrm{Tr}_{\F_{p^n}/\F_p}: \F_{p^n} \to \F_p$.  Furthermore, we have $\mathrm{Tr}_{\F_{p^n}/\F_p}(f) \in (f)$. Finally, for $a \in \F_{p^n}$ we have
\begin{eqnarray*}
\mathrm{Tr}_{\F_{p^n}/\F_p}(f(a)) = \mathrm{Tr}_{\F_{p^n}/\F_p}(f)(a).
\end{eqnarray*}

One has the following lemma.

\begin{lemma} \label{w15}
Let $f \in R_1$. Write $1=\sum_{i=1}^n c_i \alpha_i$ with $c_i \in \F_p$. Let $c \in \F_{p^n}$. Then for $i=1,\ldots,n$ one has
\begin{align*}
[\mathrm{Tr}_{\F_{p^n}/\F_p}(cf)]_i = c_i \sum_{j=1}^n \mathrm{Tr}_{\F_{p^n}/\F_p}(c\alpha_j) [f]_j \in R_2.
\end{align*}
If $c_j \neq 0$, then one has $([\mathrm{Tr}_{\F_{p^n}/\F_p}(cf)]_1,\ldots,[\mathrm{Tr}_{\F_{p^n}/\F_p}(cf)]_r)=([\mathrm{Tr}_{\F_{p^n}/\F_p}(cf)]_j)$.
\end{lemma}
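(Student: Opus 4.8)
The plan is to reduce everything to a single computation carried out inside $R_3$, exploiting that the coordinate functions $[f]_j$ land in the Frobenius-fixed subring $R_2$. First I would record two elementary facts. One: the map $\mathrm{Tr}_{\F_{p^n}/\F_p}\colon R_1 \to R_1$ is $\F_p$-linear, since it is additive and, for $a \in \F_p$, $(af)^{p^i}=a^{p^i}f^{p^i}=af^{p^i}$ because $a^p=a$, so $\mathrm{Tr}_{\F_{p^n}/\F_p}(af)=a\,\mathrm{Tr}_{\F_{p^n}/\F_p}(f)$. Two: viewing $R_2 \subseteq R_3$, every element of $R_2$ is fixed by $\mathrm{Frob}$, because a monomial $\prod X_{ij}^{e_{ij}}$ satisfies $\left(\prod X_{ij}^{e_{ij}}\right)^p=\prod X_{ij}^{e_{ij}}$ (as $X_{ij}^p=X_{ij}$ in $R_3$) and the coefficients lie in $\F_p$. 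In fact $R_2$ is precisely the $\mathrm{Frob}$-fixed subring of $R_3$, and $R_3=\bigoplus_{j=1}^n R_2\,\alpha_j$ is free over $R_2$ on the basis $\alpha_1,\ldots,\alpha_n$; this is exactly what makes the coordinate maps $[\,\cdot\,]_i$ well defined.

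The main step is then the computation of $\mathrm{Tr}_{\F_{p^n}/\F_p}(cf)$ in $R_3$. Writing $f=\sum_{j=1}^n [f]_j\alpha_j$ with $[f]_j \in R_2$, one has $cf=\sum_j [f]_j(c\alpha_j)$, and since each $[f]_j$ is $\mathrm{Frob}$-fixed the Frobenius powers pass through it, giving
\begin{align*}
\mathrm{Tr}_{\F_{p^n}/\F_p}(cf)=\sum_{i=0}^{n-1}\sum_j [f]_j (c\alpha_j)^{p^i}=\sum_{j=1}^n [f]_j\,\mathrm{Tr}_{\F_{p^n}/\F_p}(c\alpha_j),
\end{align*}
where now $\mathrm{Tr}_{\F_{p^n}/\F_p}(c\alpha_j)\in\F_p$ is the ordinary field trace. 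I will denote this element by $T=\sum_{j=1}^n \mathrm{Tr}_{\F_{p^n}/\F_p}(c\alpha_j)[f]_j \in R_2$, which is the expression appearing on the right-hand side of the statement.

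It remains to read off the coordinates of $T$. Since $T\in R_2$ and $1=\sum_{i=1}^n c_i\alpha_i$ with $c_i\in\F_p$, we have $T=T\cdot 1=\sum_{i=1}^n (c_iT)\alpha_i$ with each $c_iT\in R_2$; by uniqueness of the $R_2$-basis decomposition this forces $[\mathrm{Tr}_{\F_{p^n}/\F_p}(cf)]_i=c_iT=c_i\sum_{j=1}^n \mathrm{Tr}_{\F_{p^n}/\F_p}(c\alpha_j)[f]_j$, which is the first claim. For the second claim, if $c_j\neq 0$ then $T=c_j^{-1}[\mathrm{Tr}_{\F_{p^n}/\F_p}(cf)]_j$, so every coordinate equals $[\mathrm{Tr}_{\F_{p^n}/\F_p}(cf)]_i=c_ic_j^{-1}[\mathrm{Tr}_{\F_{p^n}/\F_p}(cf)]_j$, an $\F_p$-multiple of the $j$-th one; hence the ideal generated by all the coordinates $[\mathrm{Tr}_{\F_{p^n}/\F_p}(cf)]_1,\ldots,[\mathrm{Tr}_{\F_{p^n}/\F_p}(cf)]_n$ coincides with $([\mathrm{Tr}_{\F_{p^n}/\F_p}(cf)]_j)$. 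The only genuine care needed is the bookkeeping across $R_1$, $R_2$, and $R_3$ and the justification that $\alpha_1,\ldots,\alpha_n$ is a free $R_2$-basis of $R_3$, so that extracting coordinates is legitimate and the decomposition $T=\sum_i (c_iT)\alpha_i$ is forced; once that is in place the algebra is routine.
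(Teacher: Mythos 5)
Your proof is correct and follows essentially the same route as the paper: compute the image of $\mathrm{Tr}_{\F_{p^n}/\F_p}(cf)$ in $R_3$ as $\sum_j \mathrm{Tr}_{\F_{p^n}/\F_p}(c\alpha_j)[f]_j$ using that each $[f]_j$ is fixed under the $p$-power map, then re-expand against $1=\sum_i c_i\alpha_i$ to read off the coordinates. You merely make explicit some points the paper leaves implicit (freeness of $R_3$ over $R_2$ on the basis $\alpha_1,\ldots,\alpha_n$, and that $R_2$ is the Frobenius-fixed subring), which is fine.
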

\begin{proof}
One has 
\begin{align*}
cf \left( \sum_{j=1}^n X_{1j} \alpha_j, \ldots,\sum_{j=1}^n X_{rj} \alpha_j \right) = \sum_{i=1}^n (c \alpha_i) [f]_i \in R_3.
\end{align*}
Note that $[f]_i^p=[f]_i \in R_2$. Taking traces gives us the following identity in $R_3$:
\begin{align*}
\mathrm{Tr}_{\F_{p^n}/\F_p}(cf) (\sum_{j=1}^n X_{1j} \alpha_j, \ldots,\sum_{j=1}^n X_{rj} \alpha_j  ) &= \sum_{i=1}^n \mathrm{Tr}_{\F_{p^n}/\F_p} (c\alpha_i) [f]_i \\
&= \sum_{j=1}^n \left( c_j \sum_{i=1}^n \mathrm{Tr}_{\F_{p^n}/\F_p} (c\alpha_i) [f]_i \right) \alpha_j.
\end{align*}
This gives the first result. The second result follows directly.
\end{proof}

\subsection{Weil descent in characteristic $2$}

We are interested in the Weil descent of systems coming from summation polynomials.

\begin{proposition} \label{w1}
Let $\F=\F_{2^n}$ be a finite field of cardinality $2^n$. Let $E/\F$ be an elliptic curve given by $Y^2+a_1 XY+a_3 Y = X^3+a_2 X^2+a_4 X+a_6$. Assume that $E$ is ordinary ($a_1 \neq 0$). Then we have a surjective group morphism
\begin{align*}
E(\F) \to&  \F_2 \\
0 \mapsto& 0 \\
P \mapsto& \mathrm{Tr}_{\F/\F_2} \left( \frac{x(P)+a_2}{a_1^2} \right)
\end{align*}
with kernel $2E(\F)$.
\end{proposition}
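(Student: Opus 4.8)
The plan is to exhibit the map explicitly and verify its three defining properties (well-defined group morphism, surjectivity, and kernel $2E(\F)$) by relating the trace expression to the structure of $E(\F)$ in characteristic $2$. First I would recall that for an ordinary elliptic curve in characteristic $2$, after the substitution normalizing $a_1\neq 0$, one can bring $E$ to a simplified Weierstrass form; the quantity $\frac{x(P)+a_2}{a_1^2}$ is a natural coordinate that appears when one studies the $2$-torsion and the $2$-descent. The key algebraic input is the following: the halving/doubling structure on $E(\F)$ should translate into a trace condition. Concretely, a point $P$ lies in $2E(\F)$ precisely when a certain element associated to $P$ has trace zero, which is the standard criterion for solvability of the relevant Artin--Schreier equation $Z^2+Z = \frac{x(P)+a_2}{a_1^2}$ governing whether $P$ is divisible by $2$.

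\smallskip

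The main steps I would carry out are as follows. I would set $\phi(P)=\mathrm{Tr}_{\F/\F_2}\!\left(\frac{x(P)+a_2}{a_1^2}\right)$ for $P\neq 0$ and $\phi(0)=0$. To show $\phi$ is a group morphism, the cleanest route is to verify $\phi(P_0)+\phi(P_1)+\phi(P_2)=0$ whenever $P_0+P_1+P_2=0$, i.e.\ whenever $x(P_0),x(P_1),x(P_2)$ satisfy $S_{A,3}=0$ (Proposition \ref{1}). Since $\mathrm{Tr}_{\F/\F_2}$ is $\F_2$-linear, this reduces to showing
\begin{align*}
\mathrm{Tr}_{\F/\F_2}\!\left( \frac{x(P_0)+x(P_1)+x(P_2)+a_2}{a_1^2} \right)=0,
\end{align*}
and then I would use the additivity $\mathrm{Tr}(u)=\mathrm{Tr}(u^2)$ together with the summation-polynomial relation to show that $\frac{x(P_0)+x(P_1)+x(P_2)+a_2}{a_1^2}$ (or rather a suitable normalization of it) is of the form $w^2+w$, hence has zero trace. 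Evaluating $S_{A,3}$ with $b_2=a_1^2$, $b_4=a_1a_3$, $b_6=a_3^2$ in characteristic $2$ should produce exactly such an Artin--Schreier relation among the $x$-coordinates; extracting this identity is the computational heart of the argument.

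\smallskip

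For surjectivity and the kernel, I would argue that the induced map $E(\F)/2E(\F)\to\F_2$ is an isomorphism. The inclusion $2E(\F)\subseteq\ker\phi$ follows once $\phi$ is a morphism, since $\phi(2Q)=2\phi(Q)=0$ in $\F_2$; but more is needed, so I would instead identify $\ker\phi$ directly via the divisibility criterion. The standard fact is that for ordinary $E/\F$ in characteristic $2$, a point $P$ is in $2E(\F)$ iff the Artin--Schreier equation $Z^2+Z=\frac{x(P)+a_2}{a_1^2}$ has a solution in $\F$, which by Artin--Schreier theory happens iff $\mathrm{Tr}_{\F/\F_2}\!\left(\frac{x(P)+a_2}{a_1^2}\right)=0$, i.e.\ iff $\phi(P)=0$. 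This simultaneously gives $\ker\phi=2E(\F)$ and, since an ordinary curve over $\F_{2^n}$ has a nontrivial $2$-divisible quotient (the full $2$-torsion over $\bar\F$ has the étale part of order $2$, forcing $[E(\F):2E(\F)]$ to be even when the relevant point is rational, or more precisely $\phi$ is nonzero because some $P$ fails the trace condition), surjectivity.

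\smallskip

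The hard part will be locating and verifying the precise Artin--Schreier identity in the first step: one must show that the $x$-coordinate sum $x(P_0)+x(P_1)+x(P_2)+a_2$, divided by $a_1^2$, is genuinely of Artin--Schreier type modulo the addition law, and separately establish the halving criterion $P\in 2E(\F)\iff \mathrm{Tr}(\tfrac{x(P)+a_2}{a_1^2})=0$. I expect the cleanest derivation of the halving criterion to come from writing down the explicit doubling formula on the simplified ordinary model and solving for the $x$-coordinate of a halving, where the solvability obstruction is exactly this trace; the morphism property can then either be read off independently or deduced from the kernel description once one knows the image is a subgroup of order at most $2$.
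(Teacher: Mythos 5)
Your proposal is correct in outline but takes a genuinely different route from the paper, and in fact aims to prove more than the paper does. For the morphism property, the paper does not go through the summation polynomial at all: it takes the chord line $y=\lambda x+\nu$ through $P_1,P_2$ and reads off from the Weierstrass equation that $x_1+x_2+x_3=\lambda^2+a_1\lambda+a_2$, so that $\sum_i \frac{x_i+a_2}{a_1^2}=(\lambda/a_1)^2+\lambda/a_1$ is manifestly of Artin--Schreier type and has trace zero. This is shorter and cleaner than your plan of extracting the Artin--Schreier identity from $S_{A,3}=0$; your route does work (the paper essentially carries out that computation later, in Proposition \ref{w7}, where the constant term can only be handled by using that $x(P)$ lies on the curve), but the auxiliary slope $\lambda$ is what makes the identity transparent, and you would be wise to introduce it rather than manipulate $S_{A,3}$ symmetrically in all three variables. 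For surjectivity the paper simply cites \cite[Chapter 7, Proposition 5.4]{KO6} and does not prove the kernel statement at all, so your plan to establish the halving criterion $P\in 2E(\F)\iff \mathrm{Tr}_{\F/\F_2}\bigl(\frac{x(P)+a_2}{a_1^2}\bigr)=0$ is a legitimate, more self-contained alternative; what it buys is a single statement from which both surjectivity and the kernel identification follow.

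One soft spot to be aware of: the forward direction of your halving criterion ($P\in 2E(\F)\Rightarrow$ trace zero) is immediate from the doubling formula $x(2Q)=\lambda^2+a_1\lambda+a_2$, but the converse (trace zero $\Rightarrow$ halvable over $\F$) requires a second solvability step, since after solving $\mu^2+\mu=\frac{x(P)+a_2}{a_1^2}$ you must still produce a rational point with tangent slope $a_1\mu$, which involves another quadratic equation. You can sidestep this entirely: once $\phi$ is a morphism you have $2E(\F)\subseteq\ker\phi$, and for an ordinary curve $E(\F)[2]\cong\Z/2\Z$ (the nontrivial $2$-torsion point is Galois-invariant), so $[E(\F):2E(\F)]=\#E(\F)[2]=2$; hence it suffices to exhibit a single point of trace $1$, and then $\ker\phi=2E(\F)$ and surjectivity follow simultaneously. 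Filling in that one existence statement is the only real work your plan leaves open.
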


\begin{proof}
We will only prove that the map is a group morphism.
Let $P_i=(x_i,y_i) \in E(\F)$, $i=1,2.$
If one of the $P_i$ is 0 or their sum is 0, the additivity of the map is clear.

Otherwise the line $L$ through $P_1$ and $P_2$ (the tangent line to $E$ if $P_1=P_2$) has an equation of the form
$$L: y=\lambda x+\nu.$$
Suppose $P_3=(x_3,y_3)$ is the third point of $L\cap E$. Then we have $P_1+P_2+P_3=0$ and the equation of $E$ gives us $x_1+x_2+x_3=\lambda^2+a_1\lambda+a_2$.

This gives 
\begin{align*}
\frac{x_1+a_2}{a_1^2}+\frac{x_2+a_2}{a_1^2}+\frac{x_3+a_2}{a_1^2}=\left(\frac{\lambda}{a_1}\right)^2+\frac{\lambda}{a_1}.
\end{align*}
Notice that $\mathrm{Tr}_{\F_{2^n}/\F_2}((\frac{\lambda}{a_1})^2)=\mathrm{Tr}_{\F_{2^n}/\F_2}(\frac{\lambda}{a_1})$.
Thus we have
\begin{align*}
\mathrm{Tr}_{\F_{2^n}/\F_2}\left(\frac{x_1+a_2}{a_1^2}\right)+\mathrm{Tr}_{\F_{2^n}/\F_2}\left(\frac{x_2+a_2}{a_1^2}\right)+
\mathrm{Tr}_{\F_{2^n}/\F_2}\left(\frac{x_3+a_2}{a_1^2}\right)=0.
\end{align*}
Therefore the additivity of the map follows.
See \cite[Chapter 7,  Proposition 5.4]{KO6} for a proof of the surjectivity of the map.
\end{proof}

From the above proposition, one sees for example that if $P \in E(\F_{2^n})$, then one has $P \in 2 E(\F_{2^{2n}})$.

\begin{corollary} \label{w2}
Let $\F=\F_{2^n}$ be a finite field of cardinality $2^n$. Let $E/\F$ be an elliptic curve given by $Y^2+a_1 XY+a_3 Y = X^3+a_2 X^2+a_4 X+a_6$. Assume that $E$ is ordinary. Let $P_1,\ldots,P_m \in E(\F)$. Assume that $\pm P_1 \pm \ldots \pm P_m=0$. Then one has
\begin{align*}
0=\sum_{i=1}^m \mathrm{Tr}_{\F/\F_2} \left( \frac{x(P_i)+a_2}{a_1^2} \right).
\end{align*}
\end{corollary}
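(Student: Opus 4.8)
The plan is to obtain this as an immediate consequence of Proposition \ref{w1}. Write $\phi \colon E(\F) \to \F_2$ for the group morphism produced there, so that $\phi(0)=0$ and $\phi(P) = \mathrm{Tr}_{\F/\F_2}\!\left( \frac{x(P)+a_2}{a_1^2} \right)$ for $P \neq 0$. The entire content of the corollary is then just the additivity of $\phi$ applied to the given relation, so the role of the proof is merely to unwind that additivity and to justify that the signs may be ignored.

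First I would record the elementary fact that $\phi$ is insensitive to signs: since negation on $E$ fixes the $x$-coordinate, one has $x(-P)=x(P)$, hence $\phi(-P)=\phi(P)$ for every $P \in E(\F)$. (Even more cheaply, the target $\F_2$ has characteristic $2$, so the choice of sign in front of each point cannot affect the outcome in any case.) Consequently each term $\phi(\pm P_i)$ equals $\phi(P_i)$, and the signs appearing in the hypothesis play no role in the final sum.

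Then I would simply apply $\phi$ to both sides of the relation $\pm P_1 \pm \cdots \pm P_m = 0$. Using that $\phi$ is a homomorphism and that $\phi(0)=0$, this yields
\[
0 = \phi(0) = \phi(\pm P_1 \pm \cdots \pm P_m) = \sum_{i=1}^m \phi(\pm P_i) = \sum_{i=1}^m \mathrm{Tr}_{\F/\F_2}\!\left( \frac{x(P_i)+a_2}{a_1^2} \right),
\]
which is exactly the asserted identity. (Should any $P_i$ equal $0$, the corresponding term is read off from $\phi(0)=0$ and is consistent with the convention in the displayed sum.) I expect no genuine obstacle here: all the substantive work, in particular the verification that the trace formula defines a group homomorphism with kernel $2E(\F)$, has already been carried out in Proposition \ref{w1}, and the only point requiring even a word of justification is the harmless dropping of the signs via $x(-P)=x(P)$.
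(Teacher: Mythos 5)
Your proof is correct and is exactly the paper's argument: the paper's own proof reads simply ``The proof follows directly from Proposition \ref{w1}'', and your write-up just makes explicit the application of that homomorphism together with the observation that the signs are irrelevant (via $x(-P)=x(P)$, or already because the target is $\F_2$). Nothing further is needed.
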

\begin{proof}
The proof follows directly from Proposition \ref{w1}.
\end{proof}

\begin{lemma} \label{w3}
Let $\F=\F_{2^n}$ be a finite field of cardinality $2^n$. Let $r \in \Z_{\geq 2}$. Let $E/\F$ be an elliptic curve. Suppose that $Q \in E(\F)$, $P_i  \in E(\overline{\F}) \setminus E(\F)$ with $x(P_i) \in \F$ ($i=1,\ldots,r$) such that $Q=P_1+\ldots+P_r$. Then one has $2Q=0$.
\end{lemma}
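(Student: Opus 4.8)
The plan is to exploit the action of the Frobenius automorphism of $\overline{\F}/\F$ on the points $P_i$, using that having $x$-coordinate in $\F$ forces each $P_i$ to be swapped with its inverse by Frobenius. Write $\sigma$ for the Frobenius $z \mapsto z^{2^n}$, a topological generator of $\Gal(\overline{\F}/\F)$; it acts on $E(\overline{\F})$ as a group automorphism, and a point lies in $E(\F)$ precisely when it is fixed by $\sigma$.

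First I would record that the two points of $E$ with a prescribed $x$-coordinate are $P$ and $-P$, since the $x$-coordinate map has degree $2$ and $P \mapsto -P$ preserves $x$. Now fix $i$. Because $x(P_i) \in \F$, we have $x(\sigma(P_i)) = \sigma(x(P_i)) = x(P_i)$, so $\sigma(P_i) \in \{P_i, -P_i\}$. The hypothesis $P_i \notin E(\F)$ says that $\sigma$ does not fix $P_i$, which forces $\sigma(P_i) = -P_i$. This also disposes of the degenerate possibility that $P_i$ is $2$-torsion: in that case $P_i = -P_i$ would give $\sigma(P_i) = P_i$ and hence $P_i \in E(\F)$, against the hypothesis.

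The final step is to apply $\sigma$ to the relation $Q = P_1 + \ldots + P_r$. Since $Q \in E(\F)$ is fixed by $\sigma$ and $\sigma$ respects the group law, I obtain
\begin{align*}
Q = \sigma(Q) = \sigma(P_1) + \ldots + \sigma(P_r) = -P_1 - \ldots - P_r = -Q,
\end{align*}
so $2Q = 0$, as desired.

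There is no real obstacle; the argument is a short Galois-descent computation, and in fact uses nothing about characteristic $2$ beyond the fact that $\F$ is finite, so that $\Gal(\overline{\F}/\F)$ is generated by a single Frobenius element. The only point deserving care is the dichotomy $\sigma(P_i) \in \{P_i, -P_i\}$ together with the $2$-torsion degeneracy, both of which are absorbed cleanly by the hypothesis $P_i \notin E(\F)$.
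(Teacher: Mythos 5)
Your proof is correct and is essentially the same as the paper's: both arguments observe that $x(P_i)\in\F$ forces $\sigma(P_i)=\pm P_i$, that $P_i\notin E(\F)$ then forces $\sigma(P_i)=-P_i$, and conclude by applying $\sigma$ to the relation $Q=P_1+\ldots+P_r$. The only cosmetic difference is that the paper works with the generator of $\Gal(\F'/\F)$ for the quadratic extension $\F'$ containing all the $P_i$, while you use the Frobenius of $\overline{\F}/\F$ directly.
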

\begin{proof}
Let $\F' \subseteq \overline{\F}$ be the unique quadratic extension of $\F$ in $\overline{\F}$. Then one has $P_i \in E(\F')$. Let $G=\langle \sigma \rangle=\Gal(\F'/\F)$ of order $2$. Note that $G$ acts on $E(\F')$ by $\sigma((x:y:z))=(\sigma(x):\sigma(y):\sigma(z))$. As $x(P_i) \in \F$, we conclude that $x(\sigma(P_i))=\sigma(x(P_i))=x(P_i)$. Hence we obtain $\sigma(P_i)=\pm P_i$. As $P_i \not \in E(\F)$, we find $\sigma(P_i)=-P_i$. Then we have:
\begin{align*}
2Q&=Q+\sigma(Q) =P_1+\ldots + P_r + \sigma(P_1) + \ldots + \sigma(P_r)\\
&=P_1-P_1+ \ldots+P_r-P_r=0
\end{align*}
as required.
\end{proof}

\begin{remark}
Note that one always has $E(\overline{\F})[2] \subseteq E(\F)$, by Galois invariance.
\end{remark}

\begin{example}
Consider the elliptic curve defined by $y^2 + xy = x^3 + 1$ over $\F_2$. One has $E(\F_2) \cong \Z/4\Z$ and $E(\F_{2^2}) \cong \Z/8\Z$. There are no points in $E(\F_{2^2}) \setminus E(\F_2)$ with $x$-coordinate in $\F_2$. Hence sometimes there are no $Q$ as in Lemma \ref{w3}. In most cases one can find such $Q$ with a decomposition.
\end{example}

\begin{proposition} \label{w6}
Let $\F=\F_{2^n}$ be a finite field of cardinality $2^n$. Let $E/\F$ be an elliptic curve given by $Y^2+a_1 XY+a_3 Y = X^3+a_2 X^2+a_4 X+a_6$. Assume that $E$ is ordinary. Let $S(X_1,X_2,X_3)$ be the $3$rd summation polynomial for $E$. Let $P \in E(\F) \setminus E(\F)[2]$. Consider the ideal $I=(S(X_1,X_{2},x(P))) \subseteq \F[X_1,X_{2}]/(X_1^{2^n}-X_1, X_2^{2^n}-X_2)=R$. Then one has
\begin{align*}
g= \mathrm{Tr}_{\F/\F_2}\left( \frac{X_1+X_2 +  x(P)+a_2}{a_1^2} \right) \in I.
\end{align*}
\end{proposition}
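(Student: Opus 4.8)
The plan is to exploit the structure of $R$ as a product of fields. Since $X^{2^n}-X=\prod_{a\in\F}(X-a)$, evaluation at the points of $\F^2$ identifies $R$ with $\prod_{(x_1,x_2)\in\F^2}\F$, and in a finite product of fields the principal ideal $(h)$ consists precisely of the elements vanishing wherever $h$ vanishes. Taking $h=S(X_1,X_2,x(P))$, the assertion $g\in I$ is therefore equivalent to the pointwise statement that $g(x_1,x_2)=0$ for every $(x_1,x_2)\in\F^2$ with $S(x_1,x_2,x(P))=0$.

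To evaluate $g$ at such a point I would use the compatibility $\mathrm{Tr}_{\F/\F_2}(f)(a)=\mathrm{Tr}_{\F/\F_2}(f(a))$ recorded before Lemma \ref{w15}, together with $3a_2=a_2$ in characteristic $2$, to obtain
\[
g(x_1,x_2)=\mathrm{Tr}_{\F/\F_2}\!\left(\frac{x_1+a_2}{a_1^2}\right)+\mathrm{Tr}_{\F/\F_2}\!\left(\frac{x_2+a_2}{a_1^2}\right)+\mathrm{Tr}_{\F/\F_2}\!\left(\frac{x(P)+a_2}{a_1^2}\right).
\]
Now Proposition \ref{1} furnishes points $P_1,P_2,P_3\in E(\overline{\F})\setminus\{0\}$ with $x(P_1)=x_1$, $x(P_2)=x_2$, $x(P_3)=x(P)$ and $P_1+P_2+P_3=0$. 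Since $x(P_3)=x(P)$ we have $P_3=\pm P$, so $P_3\in E(\F)$, and because $P\notin E(\F)[2]$ also $2P_3\neq 0$. Once I know in addition that $P_1,P_2\in E(\F)$, Corollary \ref{w2} applied to $P_1+P_2+P_3=0$ gives exactly that the displayed sum is zero, finishing the argument.

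The hard part will be showing that $P_1$ and $P_2$ do not escape into the quadratic extension $\F_{2^{2n}}$, and this is where the hypothesis $P\notin E(\F)[2]$ is indispensable. If one of them, say $P_1$, lies in $E(\F)$, then $P_2=-P_3-P_1\in E(\F)$ as well. Otherwise both $P_1,P_2$ lie in $E(\overline{\F})\setminus E(\F)$ while having $x$-coordinates in $\F$; applying Lemma \ref{w3} to $Q=-P_3=P_1+P_2\in E(\F)$ then forces $2P_3=0$, contradicting $2P_3\neq 0$. Hence $P_1,P_2\in E(\F)$ in every case, the degenerate situation of Lemma \ref{w3} is excluded, and the honest trace relation of Corollary \ref{w2} can be invoked as above.
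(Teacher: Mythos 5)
Your proposal is correct and follows essentially the same route as the paper: reduce ideal membership to pointwise vanishing on $Z(S(X_1,X_2,x(P)))$ (the paper phrases this via radicality and the Nullstellensatz, you via the decomposition of $R$ as a product of fields, which is the same fact), then use Proposition \ref{1}, Lemma \ref{w3} to force $P_1,P_2\in E(\F)$, and Corollary \ref{w2} for the trace relation. Your handling of $P_3=\pm P$ and the dichotomy in the last paragraph is just a slightly more explicit version of the paper's one-line appeal to Lemma \ref{w3}.
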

\begin{proof}
Suppose $(x_1,x_{2}) \in Z(I)$ where $x_i \in \overline{\F}$. Then one has $x_i \in \F$. By definition there are $P_i \in E(\overline{\F})$ ($i=1,2$) with $x(P_i)=x_i$ such that $P_1+P_{2}+P=0$. Note that $P_1 \not \in E(\F)$ iff $P_2 \not \in E(\F)$. By Lemma \ref{w3}, it follows that $P_1, P_2 \in E(\F)$.
Corollary \ref{w2} gives
\begin{align*}
\mathrm{Tr}_{\F/\F_2} \left( \frac{x(P_1)+x(P_2)+x(P)+a_2}{a_1^2} \right)=0.
\end{align*}
Hence we obtain $g(x_1,x_2)=0$. Hence we find $Z(I) \subseteq Z(g)$. Since $I$ is a radical ideal, by the Nullstellensatz we conclude $g \in I$.
\end{proof}

\begin{remark} \label{109}
Proposition \ref{w6} does not directly generalize to any $S_m$ with $m>3$. Indeed, we cannot always apply Lemma \ref{w3}. Consider the $m$-th summation polynomial, with $m$ even. Let $Q \in E(\overline{\F}) \setminus E(\F)$ with $x(Q) \in \F$ (such points exist if $n \geq 3$). Then one has $P=P+Q-Q+\ldots+Q-Q$. This shows that other decompositions exist. Similarly, for $m$ odd one can construct such examples.

Hence the relation of Proposition \ref{w6} is not always present in our ideal. But in applications, such as relation generation for the elliptic curve discrete logarithm problem, one can just add the equation from the start (Proposition \ref{w1}). Another option is to only look for relations in the kernel of the map $E(F) \to \F_2$ in Proposition \ref{w1}.
\end{remark}

A more explicit version of Proposition \ref{w6} is the following.

\begin{proposition} \label{w7}
Let $\F=\F_{2^n}$ be a finite field of cardinality $2^n$. Let $E/\F$ be an elliptic curve given by $Y^2+a_1 XY+a_3 Y = X^3+a_2 X^2+a_4 X+a_6$. Assume that $E$ is ordinary. Let $S(X_1,X_2,X_3)$ be the $3$rd summation polynomial for $E$. Let $P \in E(\F) \setminus E(\F)[2]$. Set $T=S(X_1,X_2,x(P)) \in \F[X_1,X_2]/(X_1^{2^n}-X_1,X_2^{2^n}-X_2)=R$. Set $b=a_1(a_1 x(P)+a_3) \in \F^*$. Then in $R$ one has
\begin{align*}
\mathrm{Tr}_{\F/\F_2}\left( T/b^2 \right)=\mathrm{Tr}_{\F/\F_2}\left( \frac{X_1+X_2 +  x(P)+a_2}{a_1^2} \right).
\end{align*}
\end{proposition}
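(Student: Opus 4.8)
The plan is to prove the equivalent statement that $\mathrm{Tr}_{\F/\F_2}(f)=0$ in $R$, where
\[
f = \frac{T}{b^2} + \frac{X_1+X_2+x(P)+a_2}{a_1^2}\in R,
\]
using only additivity of the trace. The single fact I will need about the trace on $R$ is the Artin--Schreier triviality $\mathrm{Tr}_{\F/\F_2}(H^2+H)=0$ for every $H\in R$; this holds because $H^{2^n}=H$ in $R$ (the coefficients lie in $\F_{2^n}$ and each $X_i$ satisfies $X_i^{2^n}=X_i$), whence $\mathrm{Tr}_{\F/\F_2}(H^2)=\sum_{i=1}^n H^{2^i}=\sum_{i=0}^{n-1}H^{2^i}=\mathrm{Tr}_{\F/\F_2}(H)$, and the two copies cancel in characteristic $2$. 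So it suffices to exhibit an explicit $H\in R$ with $f=H^2+H$.

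Writing $x_3=x(P)$ and $P=(x_3,y_3)$, I would first record the characteristic-$2$ shape of the polynomial: here $b_2=a_1^2$, $b_4=a_1a_3$, $b_6=a_3^2$, the cross terms vanish, and the quartic part is a perfect square, so with $\sigma_1=X_1+X_2+x_3$, $\sigma_2=X_1X_2+X_1x_3+X_2x_3$, $\sigma_3=X_1X_2x_3$ one has
\[
T = \sigma_2^2 + a_1a_3\,\sigma_2 + a_1^2\,\sigma_3 + a_3^2\,\sigma_1 + b_8 .
\]
This makes $H_0=\sigma_2/b$ the natural candidate for the top-degree part of $H$. The main computation is then to simplify $f-(H_0^2+H_0)$: using $b=a_1(a_1x_3+a_3)$, the identity $x_3\sigma_2+\sigma_3=x_3^2(X_1+X_2)$, and $\beta^2/b^2=1/a_1^2$ with $\beta=a_1x_3+a_3$, every term involving $X_1,X_2$ cancels (the two copies of $(X_1+X_2)/a_1^2$ being the last to go), leaving the constant
\[
f-(H_0^2+H_0)=\rho:=\frac{x_3+a_2}{a_1^2}+\frac{a_3^2x_3+b_8}{b^2}\in\F .
\]

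The remaining, and conceptually decisive, step is to show $\mathrm{Tr}_{\F/\F_2}(\rho)=0$ by exhibiting $\rho$ itself as an Artin--Schreier element of $\F$; this is where the Weierstrass equation of $P$ enters. Clearing denominators, the numerator of $\rho$ over $b^2=a_1^2\beta^2$ works out to $a_1^2(x_3^3+a_2x_3^2+a_6)+a_1a_3a_4+a_4^2$, and substituting $x_3^3+a_2x_3^2+a_4x_3+a_6=y_3^2+a_1x_3y_3+a_3y_3$ (valid since $P\in E(\F)$) and pulling out the factor $\beta=a_1x_3+a_3$ turns this into $a_1^2y_3^2+a_1^2\beta y_3+a_1a_4\beta+a_4^2$, so that
\[
\rho=\left(\frac{y_3}{\beta}\right)^{2}+\frac{y_3}{\beta}+\left(\frac{a_4}{a_1\beta}\right)^{2}+\frac{a_4}{a_1\beta}=w_0^{2}+w_0,\qquad w_0=\frac{a_1y_3+a_4}{a_1\beta}.
\]
Here $w_0$ is a genuine element of $\F$ precisely because $\beta\neq0$, i.e.\ because $P\notin E(\F)[2]$ (in characteristic $2$ the affine $2$-torsion is cut out by $a_1x+a_3=0$); this is the one place the hypothesis is used, and it is also what guarantees $b\in\F^*$. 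Combining, $f=H_0^2+H_0+w_0^2+w_0=(H_0+w_0)^2+(H_0+w_0)$ with $H=\sigma_2/b+w_0\in R$, so $\mathrm{Tr}_{\F/\F_2}(f)=0$, which is the claim.

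I expect the main obstacle to be not the first reduction but the recognition that the leftover constant $\rho$ is Artin--Schreier: the cancellation of the $X_1,X_2$-terms is a mechanical, if sign-sensitive, characteristic-$2$ computation, whereas seeing that $\rho=w_0^2+w_0$ forces one to feed in the curve equation for $P$ and is exactly why the hypothesis $P\notin E(\F)[2]$ cannot be dropped.
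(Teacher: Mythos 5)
Your proof is correct, and its first half is essentially the paper's own computation: both of you peel off the Artin--Schreier part of $T/b^2$ coming from $\sigma_2=X_1X_2+x(P)(X_1+X_2)$ (the paper splits this into the two pieces $X_1X_2/b$ and $x(P)(X_1+X_2)/b$ and applies $\mathrm{Tr}(H^2)=\mathrm{Tr}(H)$ twice, which amounts to the same cancellation), leaving the linear term $(X_1+X_2)/a_1^2$ plus the constant $(b_6x(P)+b_8)/b^2$. Where you genuinely diverge is in the treatment of that leftover constant, i.e.\ the identity $\mathrm{Tr}_{\F/\F_2}\bigl((b_6x+b_8)/b^2\bigr)=\mathrm{Tr}_{\F/\F_2}\bigl((x+a_2)/a_1^2\bigr)$ (Equation \ref{e1}). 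The paper's official proof is indirect: it invokes Proposition \ref{w6} (hence the group morphism of Proposition \ref{w1} and the Nullstellensatz) to argue that if the two constants differed then $1$ would lie in the ideal $(S(X_1,X_2,x(P)))$, contradicting the fact that $(x(P),x(2P))$ is a zero of it. You instead feed the Weierstrass equation of $P$ into the constant and exhibit it as $w_0^2+w_0$ with $w_0=(a_1y(P)+a_4)/(a_1(a_1x(P)+a_3))$ --- which is, up to rearrangement, exactly the ``more computational'' argument the paper relegates to the remark immediately following the proposition (there written as the sum of $(y/\beta)^2+(y/\beta)$ and $(a_4/b)^2+(a_4/b)$). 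Your route buys self-containedness: no dependence on Propositions \ref{w1} and \ref{w6}, no Nullstellensatz, and as a bonus the stronger statement that $T/b^2+(X_1+X_2+x(P)+a_2)/a_1^2$ is globally of the form $H^2+H$ in $R$, not merely trace-free. The cost is one more explicit computation, and the paper's route is shorter given the machinery it has already built. I checked your algebra ($x_3\sigma_2+\sigma_3=x_3^2(X_1+X_2)$, the numerator $a_1^2(x_3^3+a_2x_3^2+a_6)+a_1a_3a_4+a_4^2$, and the factorization through $\beta$) and it is right; you also correctly locate the single place where $P\notin E(\F)[2]$ is needed, namely $\beta=a_1x(P)+a_3\neq 0$.
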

\begin{proof}
Set $x=x(P)$. Note that $b \neq 0$, because $a_1 \neq 0$ (ordinary curve) and $a_1x+a_3 \neq 0$ ($P$ is not $2$-torsion). One has
\begin{align*}
T/b^2& =\left( \frac{1}{b} X_1 X_2 \right) ^2+ \frac{1}{b} X_1X_2+\left( \frac{x}{b} (X_1+X_2) \right)^2 \\
& +  \frac{a_3}{ba_1}(X_1+X_2)+\frac{b_6x+b_8}{b^2}.
\end{align*}
Note that $\mathrm{Tr}_{\F/\F_2}(\left( \frac{1}{b} X_1 X_2 \right) ^2+ \frac{1}{b} X_1X_2)=0$. Furthermore, one has
\begin{align*}
\frac{x}{b}+\frac{a_3}{ba_1}= \frac{a_1x+a_3}{ba_1}= \frac{1}{a_1^2}.
\end{align*}
This gives $\mathrm{Tr}_{\F/\F_2} \left( \left( \frac{x}{b} (X_1+X_2) \right)^2 +  \frac{a_3}{ba_1}(X_1+X_2)  \right) = \mathrm{Tr}_{\F/\F_2}( \frac{X_1+X_2}{a_1^2})$.
Now it remains to show that
\begin{align} \label{e1}
\mathrm{Tr}_{\F/\F_2}\left(\frac{b_6x+b_8}{b^2}\right)=\mathrm{Tr}_{\F_{2^n}/\F_2}\left(\frac{x+a_2}{a_1^2} \right).
\end{align}
If both expressions are different, then from Proposition \ref{w6} it follows that $1 \in I=(S(X_1,X_{2},x(P)))$. By assumption, $2P \neq 0$. We have a relation $P-2P+P=0$. From Proposition \ref{1} we obtain $(x(2P),x(P)) \subseteq Z(I)$, contradicting that $1 \in I$.
\end{proof}

\begin{remark}
One can prove Equation \ref{e1} as follows in a more computational way. Since $P$ is a point of the curve, one has 
\begin{align*}
\mathrm{Tr}_{\F/\F_2} \left( \frac{x^3+a_2x^2+a_4x+a_6}{(a_1x+a_3)^2} \right)=0.
\end{align*}
One has
\begin{align*}
\frac{b_6x+b_8}{b^2} = \frac{x+a_2}{a_1^2} +  \frac{x^3+a_2x^2+a_4x+a_6}{(a_1x+a_3)^2} + \frac{a_4}{b}+\left(\frac{a_4}{b} \right)^2.
\end{align*}
Note that the trace of $a_4/b$ and $a_4^2/b^2$ are the same. Taking traces gives us the required identity.
\end{remark}

After Weil descent we finally obtain the main result of this section.

\begin{corollary} \label{w8}
Let $\F=\F_{2^n}$ be a finite field of cardinality $2^n$. Let $E/\F$ be an elliptic curve given by $Y^2+a_1 XY+a_3 Y = X^3+a_2 X^2+a_4 X+a_6$. Assume that $E$ is ordinary. Let $S(X_1,X_2,X_3)$ be the $3$rd summation polynomial for $E$. Let $P \in E(\F) \setminus E(\F)[2]$ and set $T=S(X_1,X_2,x(P)) \in \F[X_1,X_2]/(X_1^{2^n}-X_1,X_2^{2^n}-X_2)$. Set $b=a_1(a_1 x(P)+a_3) \in \F^*$. Let $\alpha_1,\ldots,\alpha_n$ be a basis of $\F$ over $\F_2$. Then one has in $R_2$
\begin{align*}
\sum_{j} \mathrm{Tr}_{\F/\F_2}\left( \frac{\alpha_j}{b^2} \right) [T]_j =\mathrm{Tr}_{\F/\F_2} \left( \frac{x(P)+a_2}{a_1^2} \right) + \sum_{j=1}^n \mathrm{Tr}_{\F/\F_2}\left( \frac{\alpha_j}{a_1^2} \right) \cdot \left( X_{1j}+X_{2j} \right) .
\end{align*}
\end{corollary}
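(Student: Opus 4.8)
The plan is to descend the single identity supplied by Proposition \ref{w7} and read off each side using the bookkeeping of Lemma \ref{w15}. By Proposition \ref{w7} the two elements $\mathrm{Tr}_{\F/\F_2}(T/b^2)$ and $\mathrm{Tr}_{\F/\F_2}\!\left(\frac{X_1+X_2+x(P)+a_2}{a_1^2}\right)$ are equal in $R=R_1$, so their Weil descents agree componentwise. Writing $1=\sum_i c_i\alpha_i$ with $c_i\in\F_2$, I would apply Lemma \ref{w15} to each side separately and then compare the resulting common factors.

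For the left-hand side I would take $c=1/b^2$ and $f=T$ in Lemma \ref{w15}, obtaining $[\mathrm{Tr}_{\F/\F_2}(T/b^2)]_i = c_i\sum_j\mathrm{Tr}_{\F/\F_2}(\alpha_j/b^2)\,[T]_j$. Thus the factor common to all components is exactly the left-hand side of the corollary. For the right-hand side I would take $c=1/a_1^2$ and $f=X_1+X_2+x(P)+a_2$. Substituting $X_1=\sum_k X_{1k}\alpha_k$, $X_2=\sum_k X_{2k}\alpha_k$ and expanding the constant $x(P)+a_2=\sum_j\gamma_j\alpha_j$ with $\gamma_j\in\F_2$ gives $[f]_j=X_{1j}+X_{2j}+\gamma_j$, so Lemma \ref{w15} produces the common factor $\sum_j\mathrm{Tr}_{\F/\F_2}(\alpha_j/a_1^2)(X_{1j}+X_{2j})+\sum_j\gamma_j\,\mathrm{Tr}_{\F/\F_2}(\alpha_j/a_1^2)$. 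Since the $\gamma_j$ are $\F_2$-scalars and the trace is $\F_2$-linear, the last sum equals $\mathrm{Tr}_{\F/\F_2}((x(P)+a_2)/a_1^2)$, so this common factor is precisely the right-hand side of the corollary.

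Finally, since the two descended elements agree, their $i$th components, namely $c_i$ times the two common factors, agree for every $i$; choosing an index $i$ with $c_i\neq 0$ (possible because $1\neq 0$, so some $c_i=1$) yields equality of the two common factors in $R_2$, which is the assertion.

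The only delicate point is the Weil descent bookkeeping, in particular the constant-term identity $\sum_j\gamma_j\,\mathrm{Tr}_{\F/\F_2}(\alpha_j/a_1^2)=\mathrm{Tr}_{\F/\F_2}((x(P)+a_2)/a_1^2)$ and the fact that $X_1,X_2$ descend to their coordinate variables. Beyond organizing these substitutions carefully I expect no real obstacle, since the arithmetic content of the statement is already packaged into Proposition \ref{w7}, and Lemma \ref{w15} does exactly the work of extracting the common factor on each side.
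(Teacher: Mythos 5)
Your proposal is correct and follows essentially the same route as the paper: descend the identity of Proposition \ref{w7}, apply Lemma \ref{w15} to each side with $c=1/b^2$ and $c=1/a_1^2$ respectively, and cancel the common nonzero coefficient $c_i$. The constant-term bookkeeping you flag as delicate is exactly what the paper absorbs into the quantity $d=\mathrm{Tr}_{\F/\F_2}\left((x(P)+a_2)/a_1^2\right)$, and your verification of it via $\F_2$-linearity of the trace is sound.
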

\begin{proof}
 Write $1= \sum_{i=1}^n c_i \alpha_i$ with $c_i \in \F_2$. Let $i$ be such that $c_i \neq 0$. Set $h=\frac{X_1+X_2+x(P)+a_2}{a_1^2}$.
By Proposition \ref{w7} one has
\begin{align*}
[\mathrm{Tr}_{\F/\F_2}(T/b^2)]_i  = [\mathrm{Tr}_{\F/\F_2}(h)]_i.
\end{align*}
By Lemma \ref{w15} the left hand side is equal to
\begin{align*}
c_i \sum_{j} \mathrm{Tr}_{\F/\F_2}\left( \frac{\alpha_j}{b^2} \right) [S]_j.
\end{align*}
Set $d=\mathrm{Tr}_{\F/\F_2} \left( \frac{x(P)+a_2}{a_1^2} \right) $. By Lemma \ref{w15} the right hand side is equal to
\begin{align*}
c_i  \left( d+ \sum_{j} \mathrm{Tr}_{\F/\F_2}\left( \frac{\alpha_j}{a_1^2} \right) [X_1+X_2]_j \right) = c_i  \left(d + \sum_{j} \mathrm{Tr}_{\F/\F_2}\left( \frac{\alpha_j}{a_1^2} \right) \left(  X_{1j}+X_{2j} \right) \right).
\end{align*}
This gives the result.
\end{proof}

\begin{remark} \label{w11}
In \cite{PET} the following is written: ``We have $D_{reg} \geq D_{firstfall}$. Experimental and theoretical evidences have shown in various contexts that the two definitions often lead to very close numbers.'' The above Corollary shows that this is not the case for $3$rd summation polynomials. Let us explain.

The right hand side of the equation of Corollary \ref{w8} always has degree $1$, whereas the $[T]_j$ on the left hand side usually has degree $2$. Hence it is likely that summing up certain polynomials of degree $2$ gives a polynomial of degree $1$ (in practice, this almost always happens). This, by definition of the first fall degree, shows that the first fall degree of the system given by  $S(X_1,X_2,x(P))$ after Weil descent is usually equal to $2$. Hence the first fall degree will be much smaller than the upper bound $5$ in Proposition 1 from \cite{PET} for a system consisting of a $3$rd summation polynomial. Furthermore, computations seem to suggest that the degree of regularity increases when $n$ increases. Here, the degree of regularity refers to the largest degree reached during Gr\"{o}bner basis computations using algorithms such as $F_4$ or $F_5$.

The following table records the degree of regularity  for the Weil descent system comprising the bivariate polynomial $S(X_1, X_2, x(P))$ for a random elliptic curve $E$ and a random point $P$ on $E$ over $\F_{2^n}$. Following the formulation in \cite{PET}, we include linear constraints on $X_1$ and $X_2$ to restrict their values to be in a random subspace of $\F_{2^n}$ of dimension $\lceil n/2\rceil$. We performed our computations using the ``GroebnerBasis()" function in the Magma computer Algebra System and the degree of regularity is read off from the Magma output as the largest step degree in which new polynomials were obtained in the step or the subsequent steps after setting the verbose to a nonzero value. Note that in all our computations, the first fall degree is $2$ as expected.

Here, the last fourth column in the table records the step at which the degree of regularity is first reached.

\begin{center}

\begin{tabular}{|c|c|c|c|c|} \hline
$n$ & First fall degree & Degree of regularity & Step & Memory\\ \hline
$12$&$2$&$3$&$3$ &$11.1$ MB\\ \hline
$16$&$2$&$3$&$3$&$11.1$ MB\\ \hline
$17$&$2$&$4$&$5$& $15.3$ MB\\ \hline
$20$&$2$&$4$&$5$& $30.2$ MB\\ \hline
$30$&$2$&$4$&$5$& $324.8$ MB\\ \hline
$40$&$2$&$\geq 5$&$ \geq 9$& $> 38$ GB\\ \hline
\end{tabular}

\end{center}

As the computations require more than $38$ GB for $n = 40,$ (they procedure was stopped because of lack of ram) we are not able to carry out more experiments for larger values of $n$. However, the behaviour of the step degrees and the drastic increase in memory suggest that the degree of regularity is $5$ or more when $n \geq 40$. This in turn indicates that the degree of regularity  follows an increasing pattern as $n$ increases. This raises doubt to the evidence of Assumption 2 from the article \cite{PET}: the gap between the degree of regularity and the first fall degree might be dependent on $n$.
\end{remark}

\section{Some recent developments}

In light of the recent articles written by Semaev \cite{SEM} and Karabina \cite{KARA}, we would like to point out our reservations of their claims as a result of the consequences of this article. We will focus on the first article, since the second article is quite similar.

In both articles, the authors claim that the degree of regularity of their systems is constant (in \cite{SEM} it is constantly $4$). We carried out the experiments of \cite{SEM} with $n=45$, $m=2$ and $t=2$. The only difference with the experiments in Remark \ref{w11} is that the sub vector space constraining the variables is not random. We observed that the degree of regularity increased to $5$. About $126 GB$ of RAM was used for this experiment and we completely finished the computation. For the case $n=40$, $m=2$ and  $t=2$, the degree of regularity stayed at $4$. Apparently, the choice of the specific vector space is a good one. We still believe that the degree of regularity will increase in all cases, and hence that the first fall degree assumption is very questionable.
Furthermore, $n=25$, $m=3$ and $t=3$ also seem to give degree of regularity $5$. We were not able to finish this computation after using $111$ GB of RAM.

In an updated version of the article of \cite{SEM}, another assumption about the growth of the degree of regularity has been added (according to this assumption, the degree of regularity grows slowly with certain parameters, just slow enough to obtain nice conclusions). When studying similar systems some time ago (including the splitting trick), we decided not to put up such a conjecture because we realised it would be very hard to verify (or falsify) this claim computationally. Furthermore, we could not come up with any reasoning which would support such heuristics.

One of the problems with the first fall degree assumption is that it does not `see' the number of variables. Let us give an extreme example in which we `prove' P=NP using the first fall degree assumption. In fact, the reason we wrote the first part of this article is an example related to this one.
In Section \ref{580}, we proved that it is NP-complete to check if a summation polynomial evaluates to zero or not. Let $S_m$ be the $m$-th summation polynomial for say an elliptic curve $E$ over a finite field $k$ of characteristic $2$. Suppose we want to determine if $S_m$ evaluated at $a_1,\ldots,a_m \in k$ is $0$. This is equivalent in checking if the folllowing ideal in $k[X_1,\ldots,X_{m-3}]$ contains $1$ by the splitting trick:
\begin{eqnarray*}
S_3(a_1,a_2,X_1) \\
S_3(a_3,X_1,X_2) \\
\ldots \\
S_3(a_{m-2},X_{m-4},X_{m-3}) \\
S_3(a_{m-1},a_m,X_{m-3}).
\end{eqnarray*}
We now perform Weil descent on the system to $\F_2$. The first and last equation are linear while the remaining ones are of the form $S_3(x,y,a)$. Consequently, the first fall degree of this system is $2$ (Corollary \ref{w8}). Under the first fall degree assumption, which says that the degree of regularity of such systems is bounded, we obtain a polynomial time algorithm (polynomial in the input) to solve the above problem. This seems highly unlikely.

It is certainly a very interesting question to derive good heuristical or theoretical bounds on the degree of regularity for systems as in \cite{SEM}: if indeed the degree of regularity is small, this splitting trick would give a good algorithm. Unfortunately, it is not even clear to us how to make a good heuristical bound, let alone a theoretical bound.

\section{Acknowledgements}

The authors would like to thank Ming-Deh A. Huang, Bagus Santoso, Chaoping Xing and Yun Yang for their help in preparing this manuscript. The authors are grateful to Steven Galbraith for his comments. Finally, we would like to thank the Caramel team from Nancy (France) for allowing us to use their computers to do experiments.

\bibliographystyle{acm}

\end{document}